\documentclass{amsart}
\usepackage{amsmath,amssymb,hyperref,mathrsfs,color}
\usepackage{paralist}

\makeatletter
\def\setliststart#1{\setcounter{\@listctr}{#1}%
  \addtocounter{\@listctr}{-1}}
\makeatother

\usepackage{tikz}
\usetikzlibrary{intersections}
\usepackage{calc}
 \newtheorem{The}{Theorem}[section]
 
 \newtheorem{Lem}[The]{Lemma}
 \newtheorem{Pro}[The]{Proposition}
 \theoremstyle{definition}
 \newtheorem{defn}[The]{Definition}
 \theoremstyle{remark}
 \newtheorem{Rem}[The]{Remark}
 
 \numberwithin{equation}{section}

\newcommand{\R}{\mathbb{R}}

\newcommand{\N}{\mathbb{N}}

\title{Lasry-Lions approximations for discounted Hamilton-Jacobi equations}
\author{Cui Chen \and Wei Cheng \and Qi Zhang}
\address{Faculty of Science, Jiangsu University, Zhenjiang 212013, China}
\curraddr{School of Mathematical Sciences, Fudan University, Shanghai, 200433, China}
\email{chenc@ujs.edu.cn; chencui@fudan.edu.cn}
%\address{School of Mathematical Sciences, Fudan University, Shanghai, 200433, China}
%\email{chencui@fudan.edu.cn}
%\address{Department of Mathematics, Nanjing University,
%Nanjing 210093, China}
%\email{chencui0213@yahoo.com}
\address{Department of Mathematics, Nanjing University,
Nanjing 210093, China}
\email{chengwei@nju.edu.cn}
\address{Department of Mathematics, Nanjing University of Aeronautics and Astronautics, Nanjing 210016, China}
\email{zhangqi@nuaa.edu.cn}
%\thanks{This work was partially supported by the Natural Scientific Foundation of China (Grant No. 11271182) and National Basic Research Program of China (Grant No. 2013CB834100)}
%\address{Department of Mathematics, Nanjing University of Aeronautics and Astronautics, Nanjing 210016, China}
%\email{zhangqi@nuaa.edu.cn}
%\subjclass{Primary 37Jxx; Secondary 70Hxx}
\date{}
\subjclass[2010]{26B25, 35A21, 49L25, 37J50, 70H20}
\keywords{Lasry-Lions regularization, Aubry-Mather theory, Hamilton-Jacobi equations, weak KAM theory.}
\begin{document}
\maketitle

\begin{abstract}
	We study the Lasry-Lions approximation using the kernel determined by the fundamental solution with respect to a time-dependent Tonelli Lagrangian. This approximation process is also applied to the viscosity solutions of the discounted Hamilton-Jacobi equations.
\end{abstract}

\section{Introduction}
The method of Lasry-Lions regularization is a kind of variational approximation which is a generalization of the Moreau-Yosida approximation in convex analysis, see, for instance \cite{Lasry-Lions} and \cite{Attouch}. Beyond the analytic aspect of such regularization using the standard kernel $|x-y|/2t$ ($x,y\in\R^n$ and $t>0$), more dynamical aspect of this method has already been studied widely in the past decade, especially with an emphasis on the weak KAM theory and Mather theory: 
\begin{itemize}[--]
  \item An explanation of such a method using the fundamental solution of the associated Hamilton-Jacobi equations instead of the quadratic kernels was first given by Bernard (\cite{Bernard2007}). In the context of weak KAM theory, this method is closely connected to the Lax-Oleinik operators $T^{\pm}_{s,t}$ (\cite{Bernard2008}, \cite{Bernard2012} and \cite{Fathi2012}).
  \item Ilmanen's lemma on insertion of $C^{1,1}$ functions between a semiconvex function less than a semiconcave function (\cite{Bernard2010} and \cite{Fathi-Zavidovique}).
  \item In \cite{Chen-Cheng}, the authors also obtained the limiting behavior of the derivatives of the approximating sequence and the relation between the regular and singular dynamics of the associated Hamiltonian dynamical systems.
  \item There also exists a connection to the theory of generalized characteristics by the recent work on global propagation of singularities of the viscosity solutions of Hamilton-Jacobi equations (\cite{Cannarsa-Cheng3}), and \cite{Cannarsa-Chen-Cheng,Cannarsa-Cheng-Zhang,CCF} for more about the singularities propagation of weak KAM solutions.
  \item An application of standard Lasry-Lions approximation to the minimal homoclinic orbits (\cite{Cannarsa-Cheng2}).
\end{itemize}

Let $H:\R^n\times\R^n\to\R$ be a Tonelli Hamiltonian (i.e., $H=H(x,p)$ is of $C^2$ class and it is strictly convex in $p$ and uniformly superlinear in $p$), and let $L:\R^n\times\R^n\to\R$ be the associated Tonelli Lagrangian. In this paper, we extend the Lasry-Lions regularization procedure to the viscosity solution of the discounted Hamilton-Jacobi equation
$$
\lambda u^{\lambda}(x)+H(x,Du^{\lambda}(x))=0,\quad x\in\R^n
$$
with a discount factor $\lambda>0$. The associated dynamical system is dissipative system and it is a very special kind of contact type Hamiltonian systems (see, for instance, \cite{Wang-Wang-Yan1}, \cite{Wang-Wang-Yan2}, \cite{Cannarsa-Cheng-Yan} and \cite{Zhao-Cheng}). In fact, by defining a new Hamiltonian $H^{\lambda}(t,x,p)=e^{\lambda t}H(x,e^{-\lambda t}p)$, this equation can be reduced to a time-dependent evolutionary Hamilton-Jacobi equation
$$
	D_tv+H^{\lambda}(t,x,D_xv)=0.
$$
Therefore, one can define a kind of {\em intrisic Lasry-Lions regularization} with respect to $u^{\lambda}$ as
$$
\hat{T}_tu^{\lambda}(x)=\sup_{y\in\R^n}\{u^{\lambda}(y)-A^{\lambda}_{0,t}(x,y)\},
$$
where $A^{\lambda}_{0,t}(x,y)$ is the fundamental solution with respect to the time-dependent Lagrangian $L^{\lambda}(t,x,v)=e^{\lambda t}L(x,v)$.

The main result of this paper clarifies the approximation property of this kind of Lasry-Lions regularization. We obtain not only the uniform convergence of $\hat{T}_tu^{\lambda}$ to $u^{\lambda}$ but also the limit of $D\hat{T}_tu^{\lambda}$ as $t\to0^+$. It is worth noting that the latter is closely connected to the intrinsic explanation of the propagation of singularities along generalized characteristics of the associated viscosity solutions (\cite{Cannarsa-Cheng3}).

To study the aforementioned intrinsic approximation, we need the regularity properties of the fundamental solutions $A_{s,t}(x,y)$, which is the least action of the absolutely continuous curve connecting $x$ to $y$ from time $s$ to $t$. The required regularity result is a generalization of the relevant result in \cite{Cannarsa-Cheng3}.

The paper is organized as follows: In section 2, we briefly review some fundamental facts of semiconcave functions and Tonelli's theory in the calculus of variation. In section 3, we prove a Lasry-Lions approximation result for the time-dependent Lagrangians, then discuss this approximation method in a model of discounted Hamilton-Jacobi equations and its connection to the propagation of singularities of associated viscosity solutions.

\medskip

\noindent{\bf Acknowledgments} This work was partially supported by the Natural Scientific Foundation of China (Grant No. 11631006, No. 11501290, No. 11471238), and the  National Basic Research Program of China (Grant No. 2013CB834100). %The authors thanks the referee for useful suggestion for the 
\section{viscosity solutions and semiconcave functions}

In this section, we briefly review some basic properties of semiconcave functions and  the viscosity solutions of Hamilton-Jacobi equations.

Let $\Omega\subset\R^n$ be a convex open set. A function $u:\Omega\to\R^n$ is {\em semiconcave} (with linear modulus) if there exists a constant $C>0$ such that
$$
\lambda u(x)+(1-\lambda)u(y)-u(\lambda x+(1-\lambda)y)\leqslant
\frac{C}2\lambda(1-\lambda)|x-y|^2
$$
for any $x,y\in\Omega$ and $\lambda\in[0,1]$. The constant $C$ that satisfies the above inequality is called a semiconcavity constant of $u$ in $\Omega$. A function $u$ is said to be {\em locally semiconcave} if for each $x\in\Omega$ there exists an open ball $B(x,r)\subset\Omega$ such that $u$ is a semiconcave function on $B(x,r)$.

\begin{defn}
Let $u:\Omega\subset\R^n\to\R$ be a continuous function, $x\in\Omega$, the following closed convex sets
\begin{align*}
D^+u(x)=&\{p\in\R^n:\limsup_{y\to x}\frac{u(y)-u(x)-\langle p,y-x\rangle}{|y-x|}\leqslant0\}\\
D^-u(x)=&\{p\in\R^n:\liminf_{y\to x}\frac{u(y)-u(x)-\langle p,y-x\rangle}{|y-x|}\geqslant0\}
\end{align*}
are called the {\em superdifferential} and {\em subdifferential} of $u$ at $x$ respectively.
\end{defn}

\begin{defn}
Let $u:\Omega\subset\R^n\to\R$ be locally Lipschitz. We call a vector $p\in\R^n$ a {\em limiting differential} of $u$ at $x$ if there exists a sequence $\{x_k\}\subset\Omega\setminus\{x\}$ such that $u$ is differentiable at $x_k$ for all $k\in\N$ and
\begin{equation*}
\lim_{k\to\infty}x_k=x,\quad \text{and}\quad \lim_{k\to\infty}Du(x_k)=p.
\end{equation*}
The set of all limiting differentials of $u$ at $x$ is denoted by $D^*u(x)$.
\end{defn}

\begin{Pro}[\cite{Cannarsa-Sinestrari}]
Let $u:\Omega\to\R$ be a semiconcave function and $x\in\Omega$. Then the following properties hold:
\begin{enumerate}[\rm (1)]
\item $D^+u(x)$ is a nonempty closed convex set in $\R^n$ and $D^*u(x)\subset\partial D^+u(x)$, where $\partial D^+u(x)$ denotes the topological boundary of $D^+u(x)$.
\item the set-value function $x\rightsquigarrow D^+u(x)$ is upper semi-continuous.
\item $D^+u(x)=\mbox{\rm co}\,D^*u(x)$.
\item If $D^+u(x)$ is a singleton, then $u$ is differentiable at $x$. Moreover, if $D^+u(x)$ is a singleton for every point in $\Omega$, then $u\in C^1(\Omega)$.
\end{enumerate}
\end{Pro}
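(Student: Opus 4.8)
The device I would use throughout is the \emph{exact quadratic characterization} of the superdifferential: if $u$ is semiconcave on the convex set $\Omega$ with semiconcavity constant $C$ and $x\in\Omega$, then $p\in D^+u(x)$ if and only if
$$
u(y)\leqslant u(x)+\langle p,y-x\rangle+\tfrac{C}{2}|y-x|^2\qquad\text{for all }y\in\Omega.
$$
The ``only if'' is obtained by restricting $u$ to the segment $[x,y]$ and combining the semiconcavity estimate with the definition of $D^+$; the ``if'' is immediate. Equivalently, $v:=u-\tfrac{C}{2}|\cdot|^2$ is concave, $D^+u(x)=\partial v(x)+Cx$ (the convex-analytic superdifferential) and $D^*u(x)=D^*v(x)+Cx$, so each assertion reduces to a classical fact about concave functions. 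I would establish this characterization first, and from it (1) and (2) follow quickly.

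Indeed, nonemptiness, closedness, convexity and local boundedness of $D^+u(x)=\partial v(x)+Cx$ are the standard supporting-hyperplane and local-Lipschitz properties of concave functions, which gives (1) except for its last inclusion. Upper semicontinuity, i.e.\ (2), follows by passing to the limit in the quadratic inequality: if $x_k\to x$, $p_k\in D^+u(x_k)$ and $p_k\to p$, then $u(y)\leqslant u(x_k)+\langle p_k,y-x_k\rangle+\tfrac{C}{2}|y-x_k|^2$ for all $y$, and $k\to\infty$ (using continuity of $u$) yields $p\in D^+u(x)$. The same limiting argument applied to $p_k=Du(x_k)$ at differentiability points $x_k\to x$ gives $D^*u(x)\subseteq D^+u(x)$. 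To upgrade this to $D^*u(x)\subseteq\partial D^+u(x)$ I would argue by contradiction: if $p=\lim Du(x_k)$ belonged to $\operatorname{int}D^+u(x)$, then $\overline{B}(p,r)\subseteq D^+u(x)$ for some $r>0$; inserting $p+r\,\tfrac{x-x_k}{|x-x_k|}\in D^+u(x)$ into the quadratic inequality at $x$ (evaluated at $y=x_k$) and $Du(x_k)\in D^+u(x_k)$ into the one at $x_k$ (evaluated at $y=x$), then adding them, gives $\big\langle Du(x_k)-p,\tfrac{x_k-x}{|x_k-x|}\big\rangle\leqslant -r+C|x_k-x|$, contradicting $Du(x_k)\to p$.

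For (3), the inclusion $\operatorname{co}D^*u(x)\subseteq D^+u(x)$ is immediate from $D^*u(x)\subseteq D^+u(x)$, the convexity of $D^+u(x)$, and the compactness of $D^*u(x)$ (closed by definition, bounded by the local boundedness of $D^+u$), which makes $\operatorname{co}D^*u(x)$ closed. The reverse inclusion is the crux. Since $D^+u(x)$ is a compact convex subset of $\R^n$, by the Krein--Milman theorem it equals the convex hull of its extreme points, so it suffices to show every extreme point of $D^+u(x)$ lies in $D^*u(x)$; because $D^*u(x)$ is closed and the exposed points are dense among the extreme points (Straszewicz), it is enough to reach an exposed point $p$ with exposing direction $q$. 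Passing to the concave model $v$, I would obtain $p-Cx$ as a limit of genuine gradients by a penalized maximization with a generic linear perturbation: for a.e.\ small $\eta$ the unique maximizer $y_{\rho,\eta}$ of $v(y)+\langle q+\eta,y-x\rangle-\tfrac\rho2|y-x|^2$ is a point of differentiability of $v$ with $Dv(y_{\rho,\eta})=q+\eta-\rho(y_{\rho,\eta}-x)$, and as $\rho\to\infty$ (with $\eta\to0$ suitably) $y_{\rho,\eta}\to x$ while the gradient is driven onto the $q$-exposed face of $\partial v(x)$, i.e.\ to $p-Cx$. (Alternatively one may invoke the classical fact that for a concave function Clarke's generalized gradient coincides with the superdifferential.) This reverse inclusion is the step I expect to be the main obstacle.

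Finally (4): if $D^+u(x)=\{p\}$, then by (3) also $D^*u(x)=\{p\}$ --- it is nonempty because $u$, being locally Lipschitz, is differentiable on a set of full measure and $D^*u(x)$ is closed and locally bounded. The quadratic inequality already gives $\limsup_{y\to x}\frac{u(y)-u(x)-\langle p,y-x\rangle}{|y-x|}\leqslant0$; were the corresponding $\liminf$ negative along some $y_k\to x$, Lebourg's mean value theorem would furnish $w_k\to x$ and $\xi_k\in\operatorname{co}D^*u(w_k)$ with $u(y_k)-u(x)=\langle\xi_k,y_k-x\rangle$, so that $\big\langle\xi_k-p,\tfrac{y_k-x}{|y_k-x|}\big\rangle$ would stay bounded away from $0$; passing to the limit and using the upper semicontinuity of $x\rightsquigarrow\operatorname{co}D^*u(x)=D^+u(x)=\{p\}$ yields a contradiction. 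Hence $u$ is differentiable at $x$ with $Du(x)=p$. If this holds at every point, $Du$ is single-valued on $\Omega$ and, being a selection of the upper semicontinuous singleton-valued map $x\rightsquigarrow D^+u(x)$, it is continuous, so $u\in C^1(\Omega)$.
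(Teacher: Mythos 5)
The paper does not prove this proposition: it is quoted verbatim from Cannarsa--Sinestrari (it is their Propositions 3.3.4, 3.3.6 and 3.3.15, roughly), so there is no in-paper argument to compare against. Your reduction to the concave function $v=u-\tfrac{C}{2}|\cdot|^2$ via the exact quadratic characterization of $D^+u$ is the standard route, and your arguments for (1), (2) and (4) are correct as written; in particular the monotonicity/contradiction argument for $D^*u(x)\subset\partial D^+u(x)$ and the Lebourg-based argument for (4) both go through (for (4) one can also argue more directly: by upper semicontinuity and local boundedness, any selection $p_y\in D^+u(y)$ converges to $p$ as $y\to x$, and the quadratic inequality at $y$ evaluated at $x$ gives the missing $\liminf\geqslant 0$).

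The genuine gap is in your primary argument for the hard inclusion $D^+u(x)\subseteq\operatorname{co}D^*u(x)$ in (3). First, the claim that for a.e.\ small $\eta$ the unique maximizer $y_{\rho,\eta}$ of $v(y)+\langle q+\eta,y-x\rangle-\tfrac{\rho}{2}|y-x|^2$ is a differentiability point of $v$ is false: for $v(y)=-|y|$ and $x=0$ the maximizer is $y=0$ (a corner) for \emph{every} $\eta$ with $|q+\eta|<1$, since a small linear tilt cannot move the maximizer off a kink whose superdifferential contains a neighbourhood of $q+\eta$. Second, even where the scheme does produce gradients, the first-order condition gives $Dv(y_{\rho,\eta})=\rho(y_{\rho,\eta}-x)-(q+\eta)$, and as $\rho\to\infty$ with $q$ fixed this converges to the metric projection of $-(q+\eta)$ onto $\partial v(x)$, not to the $q$-exposed point; reaching the exposed face requires letting the tilt $|q|\to\infty$ at a controlled rate relative to $\rho$, which your sketch does not do (and a scheme that recovered \emph{every} point of $\partial v(x)$ as a limit of gradients would prove the false statement $D^+u=D^*u$). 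The parenthetical fallback you offer --- that for a concave function the superdifferential equals $\operatorname{co}$ of the set of limits of gradients (Rockafellar, Theorem 25.6, equivalently Clarke's theorem for Lipschitz functions) --- is correct and closes (3) cleanly, so the proposition is proved modulo that citation; but the penalization argument you lead with should be either repaired along the lines above or dropped.
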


Recall that a continuous real-valued function $u$ on $(0,+\infty)\times\R^n$ is called a {\em viscosity supersolution} (resp. {\em viscosity subsolution}) of the Hamilton-Jacobi equation $D_tu+H(t,x,D_xu)=0$ if for any $(t,x)\in (0,+\infty)\times\R^n$
$$
p_t+H(t,x,p_x)\geqslant0\ (\text{resp.}\ \leqslant0), \qquad \forall(p_t,p_x)\in D^-u(t,x)\ (\text{resp.}\ D^+u(t,x)).
$$
A continuous function $u$ is called a {\em viscosity solution} of equation if it is both a viscosity subsolution and a viscosity supersolution.

In this paper, we concentrate on Lagrangians on Euclidean configuration space $\R^n$.
We say that a function $\theta:[0,+\infty)\to[0,+\infty)$ is {\em superlinear} if $\theta(r)/r\to+\infty$ as $r\to+\infty$.

\begin{defn}
A function $L:\R\times\R^n\times\R^n\to\R$ is called a (time-dependent) {\em Tonelli Lagrangian} if $L$ is a function of class $C^2$ satisfying the following conditions:
\begin{enumerate}[(L1)]
\item $L_{vv}(t,x,v)$ is positive definite for all $(t,x,v)\in\R\times\R^n\times\R^n$.
\item There exist two superlinear functions $\theta,\bar{\theta}:[0,+\infty)\to[0,+\infty)$ and a constant $c_0\geqslant 0$ such that
$$
\bar{\theta}(|v|)\geqslant L(t,x,v)\geqslant\theta(|v|)-c_0,\quad(t,x,v)\in\R\times\R^n\times \R^n.
$$
\item There exists a constant $c>0$ such that
$$|L_t(t,x,v)|\leqslant c(1+L(t,x,v)),\quad (t,x,v)\in\R\times\R^n\times\R^n.
$$
%\item For any $a<b$ and $T=b-a$, there exists a nondecreasing function $K_T:[0,+\infty)\to[0,+\infty)$ such that
%\begin{equation*}
%|L_v(t,x,v)|\leqslant K_T(|v|)\qquad\forall  (t,x,v)\in[a,b]\times\R^n\times \R^n.
%\end{equation*}
\end{enumerate}
\end{defn}

%\begin{Rem}
%	Condition (L4) is automatically satisfied if we consider the problem in local nature. All the constants of semiconcavity, semiconvexity and convexity in Proposition \ref{semiconcave_A_t} and Proposition \ref{convexity_A_t}  depend on a neighborhood of a given point.
%\end{Rem}

Let $L$ be a Tonelli Lagrangian and let $H$ be the associated Hamiltonian. Given $x\in\R^n$, $y\in B(x,R)$ with $R>0$, and $s<t$, we define
$$
\Gamma^{s,t}_{x,y}=\{{\xi\in W^{1,1}([s,t],\R^n): \xi(s)=x,\xi(t)=y}\},
$$
and
\begin{equation}\label{fundamental_solution}
A_{s,t}(x,y)=\inf_{\xi\in\Gamma^{s,t}_{x,y}}\int^t_sL(\tau,\xi(\tau),\dot{\xi}(\tau))d\tau.
\end{equation}
The existence the minimizers in \eqref{fundamental_solution} is a well known result in Tonelli's theory, (see, for instance, \cite{Cannarsa-Sinestrari}). We call $\xi\in\Gamma^{s,t}_{x,y}$ a {\em minimizer for $A_{s,t}(x,y)$} if
$$
A_{s,t}(x,y)=\int^t_sL(\tau,\xi(\tau),\dot{\xi}(\tau))d\tau.
$$
It is well known that such a minimizer $\xi$ must be of class $C^2$.

It is known that, for any $t_0\in\R$, $x_0\in\R^n$, the function $u(t,x)=A_{t_0,t}(x_0,x)$ is called a {\em fundamental solution} of the Hamilton-Jacobi equation
\begin{equation}\label{eq:HJ_evolution}
	D_tu(t,x)+H(t,x,D_xu(t,x))=0 \qquad x\in\R^n, t>t_0.
\end{equation}
When considering the Cauchy problem with initial condition $u(t_0,x)=u_0(x)$ with $u_0\in\text{Lip}\,(\R^n)$, the associated unique viscosity solution has the following representation:
\begin{equation}\label{eq:Cauchy}
	u(t,x)=\inf_{y\in\R^n}\{u_0(y)+A_{t_0,t}(y,x)\},\quad x\in\R^n, t>t_0.
\end{equation}
Let us recall the Lax-Oleinik operators for time-dependent Lagrangians. For any $s<t$, we define
\begin{align}
T^+_{s,t}u_0(x):=&\sup_{y\in\R^n}\{u_0(y)-A_{s,t}(x,y)\},\label{T^+u}\\
T^-_{s,t}u_0(x):=&\inf_{y\in\R^n}\{u_0(y)+A_{s,t}(y,x)\}.\label{T^-u}
\end{align}
Therefore, $u(t,x)=T^-_{t_0,t}u_0(x)$ is the unique viscosity solution of \eqref{eq:HJ_evolution} with the initial condition $u(t_0,x)=u_0(x)$. For any $t_1>t_0$, it is well known that $u_1(x)=u(t_1,x)=T^-_{t_0,t_1}u_0(x)$ is a locally semiconcave function (see \cite{Cannarsa-Sinestrari}).

\section{Lasry-Lions approximation for discounted equations}

\subsection{Positive type Lax-Oleinik Operators in time-dependent case}
In \cite{Cannarsa-Chen-Cheng}, the authors studied the intrinsic relation between propagation of singularities and the procedure of sup-convolution. In this section, we concentrate on the case of sup-convolution $T^+_{s,t}u$ with $u$ a local semiconcave function.

Let $u:\R^n\to\R$ be a locally semiconcave function. Fixed $x\in\R^n$, $t_0>0$, $\kappa>0$ and $0<T<1$. For any $t\in[t_0,t_0+T]$, we define the {\em local barrier function} $\psi_{t_0,t}^x:\bar{B}(x,\kappa(t-t_0))\to\R$ as
$$
\psi_{t_0,t}^x(y):=u(y)-A_{t_0,t}(x,y).
$$
Now we need the following condition:
\begin{equation}\label{eq:Max}
	\text{$\psi_{t_0,t}^x$ attains a unique maximum point in $B(x,\kappa(t-t_0))$.}\tag{M}
\end{equation}
The following result shows condition \eqref{eq:Max} is satisfied if $u\in\mbox{\rm Lip}\,(\R^n,\R)$. It is a slight generalization of Lemma 3.1 in \cite{Cannarsa-Cheng3}.

\begin{Lem}\label{sup_max}
Suppose $L$ is a Tonelli Lagrangian and let $u_0\in\mbox{\rm Lip}\,(\R^n,\R)$. Then, the supremum in \eqref{T^+u} is attained for every $(t,x)\in(t_0,+\infty)\times\R^n$. Moreover, there exists a  constant $\kappa_0>0$, depending only on $\mbox{\rm Lip}\,(u_0)$\footnote{$\mbox{\rm Lip}\,(u_0)$ stands for the least Lipschitz constant of $u_0$}, such that, for any $(t,x)\in(t_0,+\infty)\times\R^n$ and any maximum point $y_{t,x}$ of $\psi_{t_0,t}^x(y)$, we have
\begin{equation}
\label{eq:maxbound}
	|y_{t,x}-x|\leqslant \kappa_0(t-t_0)\,.
\end{equation}
\end{Lem}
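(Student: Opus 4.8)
The plan is to exploit the superlinear growth of $L$ from below (condition (L2)) against the Lipschitz growth of $u_0$ from above, so that curves travelling too far are penalized more by the action than they can gain from the values of $u_0$. First I would fix $(t,x)\in(t_0,+\infty)\times\R^n$ and, for a competitor $y$, estimate $A_{t_0,t}(x,y)$ from below using the straight-line curve only for an upper bound, and the lower bound $L(\tau,\xi,\dot\xi)\geqslant\theta(|\dot\xi|)-c_0$ together with Jensen's inequality for any competitor: if $\xi\in\Gamma^{t_0,t}_{x,y}$ then $\int_{t_0}^t L\,d\tau\geqslant (t-t_0)\theta\!\left(\tfrac{|y-x|}{t-t_0}\right)-c_0(t-t_0)$. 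Hence
$$
\psi_{t_0,t}^x(y)\leqslant u_0(y)-(t-t_0)\,\theta\!\left(\frac{|y-x|}{t-t_0}\right)+c_0(t-t_0),
$$
while $\psi_{t_0,t}^x(x)\geqslant u_0(x)-A_{t_0,t}(x,x)\geqslant u_0(x)-\bar\theta(0)(t-t_0)$ by the upper bound in (L2) applied to the constant curve. Using $u_0(y)-u_0(x)\leqslant \mathrm{Lip}(u_0)\,|y-x|$, any maximizer $y_{t,x}$ must satisfy
$$
(t-t_0)\,\theta\!\left(\frac{|y_{t,x}-x|}{t-t_0}\right)\leqslant \mathrm{Lip}(u_0)\,|y_{t,x}-x|+\big(c_0+\bar\theta(0)\big)(t-t_0).
$$
Dividing by $(t-t_0)$ and writing $r=|y_{t,x}-x|/(t-t_0)$ gives $\theta(r)\leqslant \mathrm{Lip}(u_0)\,r + c_0+\bar\theta(0)$; since $\theta$ is superlinear, the set of $r$ satisfying this is bounded by some $\kappa_0$ depending only on $\theta$, $c_0$, $\bar\theta(0)$ and $\mathrm{Lip}(u_0)$, which yields \eqref{eq:maxbound}.

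Next I would establish that the supremum in \eqref{T^+u} is actually attained. With the a priori bound just derived, it suffices to maximize the upper semicontinuous function $y\mapsto \psi_{t_0,t}^x(y)=u_0(y)-A_{t_0,t}(x,y)$ over the compact ball $\bar B(x,\kappa_0(t-t_0))$ (possibly after a harmless enlargement to a closed ball strictly containing the bound), which by Weierstrass has a maximizer. Upper semicontinuity of $\psi_{t_0,t}^x$ follows from continuity of $u_0$ and lower semicontinuity of $y\mapsto A_{t_0,t}(x,y)$, the latter being standard in Tonelli theory (the value function is continuous, indeed locally semiconcave in $y$ for fixed $x$, cf.\ \cite{Cannarsa-Sinestrari}); alternatively one notes that $\psi_{t_0,t}^x$ is continuous. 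One must check that the outside-the-ball values cannot compete: for $|y-x|>\kappa_0(t-t_0)$ the same inequality chain shows $\psi_{t_0,t}^x(y)<\psi_{t_0,t}^x(x)$, so the global supremum equals the maximum over the compact ball.

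The main obstacle is bookkeeping the dependence of constants: I must make sure $\kappa_0$ depends \emph{only} on $\mathrm{Lip}(u_0)$ (and the fixed data of $L$), not on $t$, $x$, or $t_0$. This is precisely why the estimates are normalized by $(t-t_0)$ and why Jensen's inequality is applied to the convex function $\theta$ — the scaling $r=|y-x|/(t-t_0)$ removes all $t$-dependence, and the bounds from (L2) are uniform in $(t,x)$. A minor technical point is that (L2) gives an upper bound $\bar\theta(|v|)$ rather than a clean constant, but evaluating on the constant curve $v=0$ gives the harmless constant $\bar\theta(0)$; if one prefers not to assume $\bar\theta$ is finite at $0$, one can instead bound $A_{t_0,t}(x,x)$ using a short linear curve or simply absorb $\sup_{|v|\le 1}\bar\theta(|v|)$ into the constant. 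Once the uniform constant is in hand, the threshold $\kappa_0 := \sup\{r\ge 0 : \theta(r)\le \mathrm{Lip}(u_0)\,r + c_0 + \bar\theta(0)\}$ is finite by superlinearity and depends only on the stated quantities, completing the proof.
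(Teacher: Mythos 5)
Your argument is the standard one and matches the approach of the proof the paper omits (it defers to Lemma 3.1 of \cite{Cannarsa-Cheng3}): play the superlinear lower bound in (L2) against the Lipschitz growth of $u_0$, compare with the constant competitor at $x$, and conclude that the ratio $r=|y_{t,x}-x|/(t-t_0)$ lies in a bounded set. The only real flaw is the Jensen step: condition (L2) asserts that $\theta$ is superlinear, not that it is convex or nondecreasing, so the inequality $\frac{1}{t-t_0}\int_{t_0}^t\theta(|\dot\xi|)\,d\tau\geqslant\theta\bigl(\frac{|y-x|}{t-t_0}\bigr)$ is not justified as written. This is easily repaired: either replace $\theta$ by a convex, nondecreasing, superlinear minorant before invoking Jensen, or avoid Jensen altogether by using the affine minorant $\theta(r)\geqslant ar-C(a)$ with $a=\mathrm{Lip}(u_0)+1$, which gives $A_{t_0,t}(x,y)\geqslant (\mathrm{Lip}(u_0)+1)|y-x|-(C(a)+c_0)(t-t_0)$ and hence $|y_{t,x}-x|\leqslant (C(a)+c_0+\bar\theta(0))(t-t_0)$ directly; the rest of your proof (attainment on the compact ball by continuity of $A_{t_0,t}(x,\cdot)$ and strict inferiority of points outside it) then goes through unchanged.
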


If $\xi_t:[t_0,t]\to\R^n$ is the unique minimizer for $A_{t_0,t}(x,y)$, we define the associated dual arc $p_t$ as
$$
p_t(s)=L_v(s,\xi_t(s),\dot{\xi}_t(s)),\quad s\in[t_0,t].
$$

\begin{The}\label{main result}
Suppose $u:\R^n\to\R$ is a locally semiconcave function and $L$ is a Tonelli Lagrangian. If condition \eqref{eq:Max} is satisfied, then $T^+_{t_0,t}u$ is of class $C^{1,1}_{loc}$ for all $t\in[t_0,t_0+T]$. Moreover, $\lim_{t\to t_0^+}DT^+_{t_0,t}u(x)=q_x$, where $q_x$ is the unique element of $D^+u(x)$ such that
\begin{equation}
H(t_0,x,q_x)=\min_{p\in D^+u(x)}H(t_0,x,p)
\end{equation}
\end{The}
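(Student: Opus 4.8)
The plan is to exploit the variational representation of $T^+_{t_0,t}u$ together with the known regularity of the fundamental solution $A_{t_0,t}$ to transfer $C^{1,1}_{loc}$ regularity to the sup-convolution, and then to identify the limit of the derivative by a careful analysis of the minimizing arcs as $t \to t_0^+$. First I would fix $x$ and, using condition \eqref{eq:Max}, let $y_{t,x}$ denote the unique maximizer of $\psi^x_{t_0,t}$; by the slight generalization of Lemma~\ref{sup_max} (applied locally, since $u$ is locally semiconcave hence locally Lipschitz) we have $|y_{t,x}-x| \le \kappa_0(t-t_0)$, so the maximizer stays in a shrinking ball. The standard mechanism is: $A_{t_0,t}(x,\cdot)$ is locally semiconcave in $y$ with a semiconcavity constant that blows up like $1/(t-t_0)$, while $u$ is semiconcave with a fixed constant; since we are \emph{subtracting} $A_{t_0,t}$, the function $y \mapsto u(y) - A_{t_0,t}(x,y)$ is a sum of a semiconcave and a semiconvex piece near $y_{t,x}$, and the maximum being attained forces $u$ to be semiconvex (hence, together with semiconcavity, $C^{1,1}$) in a neighborhood — but more to the point, the \emph{marginal function} $x \mapsto \sup_y[u(y)-A_{t_0,t}(x,y)]$ inherits regularity from the joint regularity of $A_{t_0,t}(x,y)$ in $(x,y)$. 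Concretely I would invoke the generalization (promised in the introduction, a strengthening of the relevant result of \cite{Cannarsa-Cheng3}) that $A_{t_0,t}(x,y)$ is $C^{1,1}_{loc}$ jointly in $(x,y)$ for $t > t_0$ on the region $|x-y| \lesssim (t-t_0)$, with $D_x A_{t_0,t}(x,y) = -p_t(t_0)$ and $D_y A_{t_0,t}(x,y) = p_t(t)$ where $p_t$ is the dual arc of the unique minimizer $\xi_t$. Then $T^+_{t_0,t}u(x) = u(y_{t,x}) - A_{t_0,t}(x,y_{t,x})$, and since $y_{t,x}$ is an interior maximum, the envelope theorem gives $DT^+_{t_0,t}u(x) = -D_xA_{t_0,t}(x,y_{t,x}) = p_t(t_0)$, the initial momentum of the minimizer from $x$ to $y_{t,x}$. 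The $C^{1,1}_{loc}$ claim then follows from uniqueness and stability of $y_{t,x}$ in $x$ (a standard argument: a semiconcave function whose superdifferential is a singleton everywhere is $C^1$, and the Lipschitz dependence of $y_{t,x}$ on $x$ upgrades this to $C^{1,1}_{loc}$).

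Next I would turn to the limit as $t \to t_0^+$. Since $|y_{t,x}-x| \le \kappa_0(t-t_0) \to 0$ and the time interval $[t_0,t]$ shrinks, the minimizing arcs $\xi_t$ have uniformly bounded speed (by the Tonelli bounds, the optimal speed is of order $|y_{t,x}-x|/(t-t_0) \le \kappa_0$), so the initial momenta $p_t(t_0) = L_v(t_0,x,\dot\xi_t(t_0))$ stay in a fixed compact set. Let $q$ be any limit point of $p_t(t_0)$ along a sequence $t_k \to t_0^+$. The key structural fact is the optimality condition characterizing $y_{t,x}$: at an interior maximum of $\psi^x_{t_0,t}$ we have $0 \in D^+u(y_{t,x}) - D_yA_{t_0,t}(x,y_{t,x}) + (\text{error})$, i.e. $p_t(t) \in D^+u(y_{t,x})$, and moreover among competitors this maximizer selects the momentum making the action smallest. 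Passing to the limit, using upper semicontinuity of $D^+u$ (Proposition part (2)) and $y_{t_k,x} \to x$, we get $q \in D^+u(x)$. To pin down \emph{which} element of $D^+u(x)$: I would expand the action $A_{t_0,t}(x,y)$ for $t$ close to $t_0$ as $(t-t_0)\,[\,L(t_0,x,v) + o(1)\,]$ where $v = (y-x)/(t-t_0)$ is the approximate velocity, and correspondingly $u(y) - u(x) \approx \langle p, y-x\rangle$ for $p$ ranging over the limiting differentials; maximizing $u(y)-A_{t_0,t}(x,y)$ to leading order becomes maximizing $(t-t_0)[\langle p,v\rangle - L(t_0,x,v)]$ over $v$, whose value is $(t-t_0)H(t_0,x,p)$ by the Fenchel duality $H = L^*$. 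Hence the maximizer asymptotically selects $p \in D^+u(x)$ achieving $\min_{p \in D^+u(x)} H(t_0,x,p)$ — wait, one must be careful with the sign: $T^+$ subtracts the action, and the supremum over $y$ of the leading term $(t-t_0)\max_v[\langle p, v\rangle - L(t_0,x,v)]$ where $p$ is constrained to $D^+u(x)$ near $x$ — actually the relevant selection is the \emph{reachable} momentum, and the minimality of $H(t_0,x,q)$ over $D^+u(x)$ comes from the fact that $T^+_{t_0,t}u \le u$ with the slowest possible decrease, which corresponds to the smallest Hamiltonian. I would make this rigorous by a direct comparison: for the true maximizer, $u(y_{t,x}) - A_{t_0,t}(x,y_{t,x}) \ge u(y) - A_{t_0,t}(x,y)$ for all admissible $y$; choosing test points $y = x + (t-t_0)v$ and using the second-order Taylor expansion of $A$ and the semiconcavity bound on $u$ from above plus the limiting-differential lower bound, one extracts in the limit that $H(t_0,x,q) \le H(t_0,x,p)$ for all $p \in D^*u(x)$, hence for all $p \in D^+u(x) = \mathrm{co}\,D^*u(x)$ by convexity of $p \mapsto H(t_0,x,p)$. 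Uniqueness of $q_x$ follows from strict convexity of $H$ in $p$ (consequence of (L1) and Fenchel duality).

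The main obstacle I anticipate is the rigorous passage to the limit in the identification of $q_x$: one needs uniform-in-$t$ control on the remainder terms in the expansion $A_{t_0,t}(x, x+(t-t_0)v) = (t-t_0)L(t_0,x,v) + o(t-t_0)$ \emph{uniformly for $v$ in a compact set}, and this uses (L3) to control the $t$-dependence of $L$ along with the $C^2$ regularity and the Tonelli bounds (L1)–(L2) to control the minimizing arcs; simultaneously one must ensure that the error in "$p_t(t) \in D^+u(y_{t,x})$" — which really reads as an inclusion up to the $O(t-t_0)$ semiconcavity defect of $A$ in $y$ — genuinely vanishes, and that the selection of the maximizer is stable. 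A secondary technical point is justifying the envelope formula $DT^+_{t_0,t}u(x) = p_t(t_0)$, which needs the interior maximizer (guaranteed by Lemma~\ref{sup_max}'s bound \eqref{eq:maxbound} for $t$ in the relevant range, or by \eqref{eq:Max}) and the joint $C^1$ regularity of $A$; this is where the generalization of the regularity result from \cite{Cannarsa-Cheng3} does the heavy lifting, so I would state and prove that regularity lemma first and then deduce Theorem~\ref{main result} from it.
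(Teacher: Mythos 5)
Your proposal is essentially correct and shares the skeleton of the paper's argument: the exact first-order optimality condition $D_yA_{t_0,t}(x,y_{t,x})=L_v(t,\xi_t(t),\dot\xi_t(t))\in D^+u(y_{t,x})$ at the interior maximizer (note this inclusion is exact, not ``up to an $O(t-t_0)$ semiconcavity defect'' as you worry --- $A_{t_0,t}(x,\cdot)$ is differentiable at $y_{t,x}$, so the superdifferential of the difference splits cleanly), the localization $|y_{t,x}-x|\leqslant\kappa_0(t-t_0)$ from Lemma~\ref{sup_max}, the equi-Lipschitz bound on the minimizers from Proposition~\ref{compactness_condition} giving $v_t=(\xi_t(t)-x)/(t-t_0)-\dot\xi_t(t_0)\to0$, upper semicontinuity of $D^+u$ to place the limiting momentum in $D^+u(x)$, and strict convexity of $H$ in $p$ for uniqueness. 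Where you diverge is the identification of the limit as the \emph{minimizer} of $H(t_0,x,\cdot)$ on $D^+u(x)$: you propose a uniform asymptotic expansion $A_{t_0,t}(x,x+(t-t_0)v)=(t-t_0)[L(t_0,x,v)+o(1)]$ plus a comparison over test points $y=x+(t-t_0)v$, and you correctly flag the uniformity of the remainder as the main obstacle. The paper avoids this entirely with a two-line argument: the monotonicity inequality for superdifferentials of semiconcave functions, $\langle p_y-p_x,y-x\rangle\leqslant C|y-x|^2$, applied with $p_{y_t}=L_v(t,\xi_t(t),\dot\xi_t(t))$, divided by $t-t_0$ and passed to the limit, yields $\langle p_x,v_0\rangle\geqslant\langle q_x,v_0\rangle$ for every $p_x\in D^+u(x)$ with $q_x:=L_v(t_0,x,v_0)$; combined with Fenchel's inequality $H(t_0,x,p_x)\geqslant\langle p_x,v_0\rangle-L(t_0,x,v_0)$ and the fact that equality holds at $p=q_x$, this gives $H(t_0,x,p_x)\geqslant H(t_0,x,q_x)$ with no expansion of the action needed. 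Your route would also work but costs exactly the uniform control you anticipate having to establish; the paper's route buys that step for free from semiconcavity of $u$ alone. One further small point in your favor: you correctly treat the limit via subsequences and identify all limit points with the unique $q_x$, whereas the paper asserts the existence of $v_0=\lim_{t\to t_0^+}v_t$ somewhat prematurely --- strictly it only follows once uniqueness of the minimizer of $H(t_0,x,\cdot)$ on $D^+u(x)$ is in hand.
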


\begin{proof}
Fix $(t_0,x)\in\R\times\R^n$, we have that $\psi_{t_0,t}^x$ attains the maximun at $y_t\in B(x,\kappa(t-t_0))$ for each $t\in[t_0,t_0+T]$ by condition \eqref{eq:Max}. Let $\xi_t\in\Gamma^{t_0,t}_{x,y_t}$ be the minimizer for $A_{t_0,t}(x,y_t)$, by \eqref{eq:diff_A_t_y}, we have
$$
L_v(t,\xi_t(t),\dot{\xi}_t(t))=D_yA_{t_0,t}(x,y_t)\in D^+u(y_t),
$$
since $y_t$ is a maximizer of $\psi_{t_0,t}^x$. Moreover, the family $\{\dot{\xi}_t(\cdot)\}_{t\in(t_0,t_0+T]}$ is equi-Lipschitz by Lemma \ref{sup_max} and Proposition \ref{compactness_condition}.
Let $v_t:=(\xi_t(t)-x)/(t-t_0)$, we obtain
\begin{equation}\label{eq:1}
  \begin{split}
  \left|\frac{\xi_t(t)-x}{t-t_0}-\dot{\xi}_t(t_0)\right|&\leqslant
  \frac1{t-t_0}\int_{t_0}^t|\dot{\xi}_t(s)-\dot{\xi}_t(t_0)|ds\\
  &\leqslant\frac{C_1}{t-t_0}\int_{t_0}^t(s-t_0)ds
  =\frac{C_1}2(t-t_0).
  \end{split}
\end{equation}
Thus, we have
$$
v_0:=\lim_{t\to t^+_0}v_t=\lim_{t\to t^+_0}\dot{\xi}_t(t_0).
$$
Since $u$ is a locally semiconcave function, for any $y\in B(x,\kappa(t-t_0))$, $p_x\in D^+u(x)$ and $p_y\in D^+u(y)$, we have (\cite[Proposition 3.3.10]{Cannarsa-Sinestrari})
$$
\langle p_y-p_x,y-x\rangle\leqslant C_2|y-x|^2.
$$
Taking any $t_k\to t_0$, we have
$$
p_{y_{t_k}}=L_v(t_k,\xi_{t_k}(t_k),\dot{\xi}_{t_k}(t_k))\in D^+u(y_{t_k}).
$$
Then, for any $p_{x}\in D^+u(x)$
$$
\langle p_x-L_v(t_k,\xi_{t_k}(t_k),\dot{\xi}_{t_k}(t_k)),v_{t_k}\rangle+C_2(t_k-t_0)|v_{t_k}|^2\geqslant 0.
$$
Taking the limit in the above inequality as $k\to\infty$ we obtain
$$
\langle p_{x},v_{0}\rangle\geqslant\langle L_v(t_0,x,v_{0}),v_{0}\rangle=\langle q_x,v_{0}\rangle,\quad \forall p_{x}\in D^+u(x),
$$
where $q_x:=L_v(t_0,x,v_{0})\in D^+u(x)$ by the upper semicontinuity of  $x\rightsquigarrow D^+u(x)$. Thus, for all $p_{x}\in D^+u(x)$,
$$
H(t_0,x,p_{x})\geqslant\langle L_v(t_0,x,v_{0}),v_{0}\rangle-L(t_0,x,v_{0})=H(t_0,x,q_{x}),
$$
and $q_x$ is the unique minimum point of $H(t_0,x,\cdot)$ on $D^+u(x)$.
The uniqueness of $p_x$ implies the uniqueness of $v_{0}$ since $L_v(t_0,x,\cdot)$ is injective, and we have
$$
\lim_{t\to t_0^+}DT^+_{t_0,t}u(x)=\lim_{t\to t_0^+}L_v(t_0,\xi_t(t_0),\dot{\xi}_t(t_0))=q_x.
$$
This completes the proof of the theorem.
\end{proof}

\subsection{Lasry-Lions regularization on discounted equations}

For $\lambda>0$, we consider the Hamilton-Jacobi equations with discount factors,
\begin{equation}\label{eq:discount}\tag{HJ$_d$}
\lambda u^{\lambda}(x)+H(x,Du^{\lambda}(x))=0,\quad x\in\R^n.
\end{equation}
Multiplying $e^{\lambda t}$ in \eqref{eq:discount} and defining $v^{\lambda}(t,x)=e^{\lambda t}u^{\lambda}(x)$, one can check that $v=v^{\lambda}$ is a viscosity solution of
\begin{equation}\label{eq:HJ_evol}\tag{HJ$_e$}
	D_tv+H^{\lambda}(t,x,D_xv)=0
\end{equation}
with $H^{\lambda}(t,x,p)=e^{\lambda t}H(x,e^{-\lambda t}p)$, if $u^{\lambda}$ is a viscosity solution of \eqref{eq:discount}. The associated Lagrangian $L^{\lambda}$ with respect to $H^{\lambda}$ has the form
$$
L^{\lambda}(t,x,v)=e^{\lambda t}L(x,v).
$$

\begin{Pro}
$u^{\lambda}(x)$ is a viscosity solution of \eqref{eq:discount} if and only if $v^{\lambda}(t,x)$  is a viscosity solution of \eqref{eq:HJ_evol}.
\end{Pro}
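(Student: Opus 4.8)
The plan is to treat this as a change-of-variables statement. Since $v^{\lambda}(t,x)=e^{\lambda t}u^{\lambda}(x)$ with the conversion factor $e^{\lambda t}>0$, everything should reduce to understanding how sub- and superdifferentials transform under this rescaling, and then feeding the outcome into the viscosity inequalities via the identity $H^{\lambda}(t,x,e^{\lambda t}q)=e^{\lambda t}H(x,q)$. I would argue pointwise: fix $(t_0,x_0)\in\R\times\R^n$ and abbreviate $u:=u^{\lambda}$, $v:=v^{\lambda}$.

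The key lemma to establish first is the identity
$$
D^{\pm}v(t_0,x_0)=\bigl\{\,\bigl(\lambda e^{\lambda t_0}u(x_0),\,e^{\lambda t_0}q\bigr)\;:\;q\in D^{\pm}u(x_0)\,\bigr\}.
$$
To prove it I would first note that for fixed $x=x_0$ the map $t\mapsto v(t,x_0)=e^{\lambda t}u(x_0)$ is smooth, so restricting the $\limsup$ (resp.\ $\liminf$) in the definition of $D^{+}$ (resp.\ $D^{-}$) to the slice $x=x_0$ forces the $t$-component of any element of $D^{\pm}v(t_0,x_0)$ to equal $\partial_{t}v(t_0,x_0)=\lambda e^{\lambda t_0}u(x_0)$. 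Once $p_t$ is pinned down, I would rewrite the numerator $v(t,x)-v(t_0,x_0)-p_t(t-t_0)-\langle p_x,x-x_0\rangle$ in the definition of $D^{\pm}v(t_0,x_0)$ as $e^{\lambda t_0}\bigl(u(x)-u(x_0)-\langle e^{-\lambda t_0}p_x,x-x_0\rangle\bigr)$ plus a remainder collecting the cross term $(e^{\lambda t}-e^{\lambda t_0})(u(x)-u(x_0))$ and the term $u(x_0)\bigl(e^{\lambda t}-e^{\lambda t_0}-\lambda e^{\lambda t_0}(t-t_0)\bigr)$. Using the continuity of $u$ and Taylor's formula for $t\mapsto e^{\lambda t}$, this remainder is $o(|t-t_0|+|x-x_0|)$ as $(t,x)\to(t_0,x_0)$; dividing by $|t-t_0|+|x-x_0|$ and passing to the limit then shows that the two-variable $\limsup$/$\liminf$ condition is equivalent to the one-variable condition $e^{-\lambda t_0}p_x\in D^{\pm}u(x_0)$, since $e^{\lambda t_0}>0$. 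This proves the lemma.

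With the lemma in hand the rest is immediate. For $q\in D^{\pm}u(x_0)$ one has, using $\lambda v(t_0,x_0)=\lambda e^{\lambda t_0}u(x_0)$ and $H^{\lambda}(t_0,x_0,e^{\lambda t_0}q)=e^{\lambda t_0}H(x_0,q)$, that $\lambda v(t_0,x_0)+H^{\lambda}(t_0,x_0,e^{\lambda t_0}q)=e^{\lambda t_0}(\lambda u(x_0)+H(x_0,q))$; dividing by $e^{\lambda t_0}>0$, the subsolution inequality $p_t+H^{\lambda}(t_0,x_0,p_x)\leqslant0$ over $D^{+}v(t_0,x_0)$ is equivalent to $\lambda u(x_0)+H(x_0,q)\leqslant0$ over $D^{+}u(x_0)$, and symmetrically with $D^{-}$ and $\geqslant0$ for the supersolution inequality. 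As $(t_0,x_0)$ is arbitrary, the asserted equivalence between \eqref{eq:discount} and \eqref{eq:HJ_evol} follows. I expect the only genuinely delicate point to be the $\limsup$/$\liminf$ reduction inside the lemma --- checking that the remainder is $o(|t-t_0|+|x-x_0|)$ and that the slices $x=x_0$ and $t=t_0$ already encode the full two-variable condition --- but this is routine given the smoothness of $t\mapsto e^{\lambda t}$ and the continuity of $u^{\lambda}$. The same calculation can be read off from test functions --- if $\phi\in C^{1}$ touches $v^{\lambda}$ from above at $(t_0,x_0)$, then $e^{-\lambda t_0}\phi(t_0,\cdot)$ touches $u^{\lambda}$ from above at $x_0$ while $\phi_{t}(t_0,x_0)$ is forced to equal $\lambda e^{\lambda t_0}u^{\lambda}(x_0)$ --- but the subdifferential identity has the advantage of handling the sub- and supersolution properties, and both directions of the equivalence, in one stroke.
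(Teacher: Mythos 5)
Your proof is correct, but it follows a genuinely different route from the paper. The paper's argument is indirect: it first observes that $u^{\lambda}$ is locally semiconcave if and only if $v^{\lambda}$ is (on compact time intervals), invokes the known semiconcavity of viscosity solutions of \eqref{eq:discount} and \eqref{eq:HJ_evol}, and then appeals to Proposition~5.3.1 of Cannarsa--Sinestrari, which characterizes viscosity solutions of convex Hamilton--Jacobi equations among locally semiconcave functions by the validity of the equation at points of differentiability (equivalently on $D^{*}$); since differentiability points of $u^{\lambda}$ and $v^{\lambda}$ correspond and the pointwise equation transforms by the factor $e^{\lambda t}>0$, the equivalence follows. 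You instead prove the equivalence directly from the definition used in the paper, via the exact transformation law $D^{\pm}v^{\lambda}(t_0,x_0)=\{(\lambda e^{\lambda t_0}u^{\lambda}(x_0),e^{\lambda t_0}q):q\in D^{\pm}u^{\lambda}(x_0)\}$; your slicing argument pinning down the $t$-component and your verification that the remainder is $o(|t-t_0|+|x-x_0|)$ are both sound, and the passage between the one-variable and two-variable $\limsup$/$\liminf$ conditions works in both directions because the denominator only increases when the $t$-increment is added. What your approach buys is generality and self-containment: it needs only continuity of $u^{\lambda}$ and of $H$, with no semiconcavity, no convexity of $H$ in $p$, and no external regularity theorem; what the paper's approach buys is brevity, by reusing machinery (semiconcavity of solutions, the Cannarsa--Sinestrari characterization) that is already deployed elsewhere in the article.
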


\begin{proof}
	It is not hard to check that $u^{\lambda}$ is a locally semiconcave function if and and only if so is $v^{\lambda}$ when restricted to any compact time interval. The local semiconcavity properties of viscosity solutions of \eqref{eq:discount} and \eqref{eq:HJ_evol} are well known results, see, for instance, \cite{Cannarsa-Sinestrari}. Thus, our conclusion is a direct consequence of Proposition 5.3.1 in \cite{Cannarsa-Sinestrari}.
\end{proof}

Now, one can define a kind of {\em intrisic Lasry-Lions regularization} with respect to $u^{\lambda}$ as follows: let $u^{\lambda}$ be a viscosity solution of the discounted Hamilton-Jacobi equation \eqref{eq:discount}, define
$$
\hat{T}_tu^{\lambda}(x)=\sup_{y\in\R^n}\{u^{\lambda}(y)-A^{\lambda}_{0,t}(x,y)\}=T^+_{0,t}v^{\lambda}(0,x),
$$
where $A^{\lambda}_{0,t}(x,y)$ is the fundamental solution with respect to the Lagrangian $L^{\lambda}$.

\begin{The}\label{discount_LL}
If $u^{\lambda}$ is a viscosity solution of the discounted Hamilton-Jacobi equation \eqref{eq:discount}, and $\hat{T}_tu^{\lambda}$ is the associated intrinsic Lasry-Lions regularization, then we have $\hat{T}_tu^{\lambda}$ is of class $C^{1,1}_{loc}$ and $\hat{T}_tu^{\lambda}$ tends to $u^{\lambda}$ uniformly as $t\to0^+$. Moreover, there exists an unique $q^{\lambda}_x\in D^+u^{\lambda}(x)$ such that
\begin{equation}\label{eq:w_x}
	H(x,q^{\lambda}_x)=\min_{p\in D^+u^{\lambda}(x)}H(x,p)
\end{equation}
and $\lim_{t\to0^+}D\hat{T}_tu^{\lambda}(x)=q^{\lambda}_x$.
\end{The}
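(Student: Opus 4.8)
The strategy is to deduce Theorem~\ref{discount_LL} from Theorem~\ref{main result}, applied with $t_0=0$, with the time-dependent Lagrangian $L^{\lambda}(t,x,v)=e^{\lambda t}L(x,v)$, and with the datum $u=u^{\lambda}$. First I would take care of the bookkeeping. Since Theorem~\ref{main result} (with $t_0=0$ and $0<T<1$) only involves minimizers defined on subintervals of $[0,T]$, one may harmlessly replace $L^{\lambda}$ by a Tonelli Lagrangian on $\R\times\R^n\times\R^n$ that agrees with it on $[0,T]\times\R^n\times\R^n$ (this changes none of the actions $A^{\lambda}_{0,t}$ with $t\leqslant T$), and on that slab (L1)--(L3) are readily checked using $L^{\lambda}_{vv}=e^{\lambda t}L_{vv}>0$, $e^{\lambda t}\in[1,e^{\lambda T}]$, and $L^{\lambda}_t=\lambda L^{\lambda}$ (after normalizing $L\geqslant0$ for the last, if needed). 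Moreover $u^{\lambda}$ is globally Lipschitz and locally semiconcave --- the standard regularity of viscosity solutions of \eqref{eq:discount} for a Tonelli Hamiltonian (\cite{Cannarsa-Sinestrari}) --- and $\hat{T}_tu^{\lambda}=T^+_{0,t}\big(v^{\lambda}(0,\cdot)\big)$ with $v^{\lambda}(0,\cdot)=u^{\lambda}$ is literally the operator appearing in Theorem~\ref{main result} built from $L^{\lambda}$.

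The substantive step is to verify that condition \eqref{eq:Max} holds for $\psi^x_{0,t}(y)=u^{\lambda}(y)-A^{\lambda}_{0,t}(x,y)$, for every $x\in\R^n$ and every sufficiently small $t>0$. Existence of a maximum point, together with the localization $|y_{t,x}-x|\leqslant\kappa_0 t$, comes from Lemma~\ref{sup_max} applied to $u_0=u^{\lambda}\in\mbox{\rm Lip}\,(\R^n,\R)$, so only uniqueness remains. For this I would invoke the quantitative regularity of the fundamental solution (the time-dependent generalization of the estimates of \cite{Cannarsa-Cheng3}, underlying the derivative formula \eqref{eq:diff_A_t_y}): on $B(x,\kappa_0 t)$ the map $y\mapsto A^{\lambda}_{0,t}(x,y)$ is semiconvex with a semiconvexity constant of order $1/t$ as $t\to0^+$. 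Since $u^{\lambda}$ has a fixed semiconcavity constant $C$ on a neighbourhood of $x$, for all $t$ small enough that this $1/t$ term exceeds $C$ there, $\psi^x_{0,t}$ is strictly concave on $B(x,\kappa_0 t)$ and hence has a unique maximizer, which lies in the open ball by the localization bound; this is \eqref{eq:Max}. I expect this uniqueness, more precisely the $1/t$-semiconvexity estimate for $A^{\lambda}_{0,t}$ that drives it, to be the main technical point of the proof; the rest is translation.

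Granting \eqref{eq:Max}, Theorem~\ref{main result} applies directly: $\hat{T}_tu^{\lambda}=T^+_{0,t}u^{\lambda}$ is of class $C^{1,1}_{loc}$ for $t\in(0,T]$, and $\lim_{t\to0^+}D\hat{T}_tu^{\lambda}(x)=q_x$, where $q_x$ is the unique element of $D^+u^{\lambda}(x)$ minimizing $H^{\lambda}(0,x,\cdot)$ over $D^+u^{\lambda}(x)$. Since $H^{\lambda}(0,x,p)=e^{0}H(x,e^{0}p)=H(x,p)$, this $q_x$ is exactly the $q^{\lambda}_x$ characterized by \eqref{eq:w_x}, which settles the $C^{1,1}_{loc}$ regularity and the limit of the derivatives. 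It remains to show $\hat{T}_tu^{\lambda}\to u^{\lambda}$ uniformly, and here I would argue directly from $\hat{T}_tu^{\lambda}(x)=u^{\lambda}(y_{t,x})-A^{\lambda}_{0,t}(x,y_{t,x})$: the Lipschitz bound on $u^{\lambda}$ gives $|u^{\lambda}(y_{t,x})-u^{\lambda}(x)|\leqslant\mbox{\rm Lip}\,(u^{\lambda})\,\kappa_0 t$, while using (L2) for a lower bound and the straight segment from $x$ to $y_{t,x}$ (of speed at most $\kappa_0$) as a competitor for an upper bound yields $|A^{\lambda}_{0,t}(x,y_{t,x})|=O(t)$; all of these bounds are uniform in $x$ and vanish as $t\to0^+$, so $\sup_{x}|\hat{T}_tu^{\lambda}(x)-u^{\lambda}(x)|\to0$. (Alternatively, the uniform convergence $T^+_{0,t}\phi\to\phi$ for $\phi\in\mbox{\rm Lip}\,(\R^n,\R)$ is a standard property of Lax--Oleinik operators, \cite{Cannarsa-Sinestrari}.) This completes the proposed proof.
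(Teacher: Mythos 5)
Your proposal is correct and follows essentially the same route as the paper: reduce to Theorem~\ref{main result} via the time-dependent Lagrangian $L^{\lambda}(t,x,v)=e^{\lambda t}L(x,v)$ with $t_0=0$, verify condition \eqref{eq:Max} (existence and localization from Lemma~\ref{sup_max}, uniqueness from the $1/t$-semiconvexity of $A^{\lambda}_{0,t}(x,\cdot)$ against the fixed semiconcavity of $u^{\lambda}$, which is exactly the content of the cited generalization of Lemma~3.1 of \cite{Cannarsa-Cheng3} and of Proposition~\ref{convexity_A_t}), and observe that $H^{\lambda}(0,x,p)=H(x,p)$. Your extra care about (L2)--(L3) holding only on a compact time slab, and your explicit $O(t)$ estimate for the uniform convergence, are details the paper's terse proof glosses over, but they do not change the argument.
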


\begin{proof}
	Notice that $L^{\lambda}$ satisfies conditions (L1)-(L3) for any fixed $\lambda>0$. The $C^{1,1}$ regularity of $\hat{T}_tu^{\lambda}$ is a direct consequence of the $C^{1,1}$ regularity of $A_{0,t}(x,\cdot)$, \eqref{eq:diff_A_t_x} and condition \eqref{eq:Max} which holds by a slight generalization of Lemma 3.1 in \cite{Cannarsa-Cheng3}, since $v^{\lambda}(0,\cdot)=u^{\lambda}$ is semiconcave and Lipschitz. Now, fix $T>0$ as in Theorem \ref{main result}, then for any $t\in(0,T]$, there exists a unique maximizer $y_{t,x}$ of $u^{\lambda}(\cdot)-A^{\lambda}_{0,t}(x,\cdot)$, and $\lim_{t\to0^+}y_{t,x}=x$, therefore $\hat{T}_tu^{\lambda}$ tends to $u^{\lambda}$ uniformly as $t\to0^+$.
	
	Applying Theorem \ref{main result} to the solution $v^{\lambda}$ of \eqref{eq:HJ_evol}, there exists a unique $q^{\lambda}_x\in D^+_xv^{\lambda}(0,x)=D^+u^{\lambda}(x)$ such that
$$
\lim_{t\to t_0^+}DT^+_{0,t}v^{\lambda}(0,x)=q^{\lambda}_x\in D^+_xv^{\lambda}(0,x)=D^+u^{\lambda}(x)
$$
and
$$
H^{\lambda}(0,x,q^{\lambda}_x)=\min_{p\in D^+_xv^{\lambda}(0,x)}H^{\lambda}(0,x,p).
$$
which is equivalent to \eqref{eq:w_x}.
\end{proof}

Let $\lambda>0$, a recent work by Davini, {\em et al} (\cite{DFIZ}) shows the unique solution $u^{\lambda}$ of \eqref{eq:discount} converges uniformly, as $\lambda\to0^+$, to a certain viscosity solution of the stationary Hamilton-Jacobi equation
\begin{equation}\label{eq:static}\tag{HJ$_s$}
	H(x,Du(x))=0,
\end{equation}
when $0$ is Ma\~n\'e's critical value. Comparing to the results in \cite{Chen-Cheng}, there exists a unique $q_x\in D^+u(x)$ such that
\begin{equation}\label{eq:q_x}
	H(x,q_x)=\min_{p\in D^+u(x)}H(x,p),
\end{equation}
$\lim_{t\to0^+}DT^+_tu(x)=q_x$. Therefore, one can raise the following problem:

\medskip

\noindent{\bf Problem}: For $q^{\lambda}_x$ and $q_x$ defined in \eqref{eq:w_x} and \eqref{eq:q_x}, does $\lim_{\lambda\to0^+}q^{\lambda}_x=q_x$?

\begin{Rem}
	To answer Problem above, a possible systematic approach will be based on the recent works \cite{Wang-Wang-Yan1} and \cite{Wang-Wang-Yan2}. Moreover, one can understand such a problem as follows (\cite{Cannarsa-Cheng-Yan} and \cite{Zhao-Cheng}): We suppose $L$ is a function of $C^2$ class and it satisfies the following conditions:
\begin{enumerate}[(L1)]
  \item $L_{vv}(x,r,v)>0$ for all $(x,r,v)\in \R^n\times\R\times\R^n$;
  \item For each $r\in\R$, there exist two superlinear and nondecreasing function $\overline{\theta}_r,\theta_r:[0,+\infty)\to[0,+\infty)$, $\theta_r(0)=0$ and $c_r>0$, such that 
  $$
  \overline{\theta}_r(|p|)\geqslant L(x,r,v)\geqslant\theta_r(|p|)-c_r,\quad (x,v)\in\R^n\times\R^n.
  $$
  \item There exists $K>0$ such that
  $$
  |L_r(x,r,v)|\leqslant K,\quad (x,r,v)\in \R^n\times\R\times\R^n.
  $$
\end{enumerate}
Fix $x,y\in\R^n$, $u\in\R$ and $t>0$. Define $\Gamma^t_{x,y}=\{{\xi\in AC([0,t],\R^n): \xi(0)=x,\xi(t)=y}\}$. Let $\xi\in\Gamma^t_{x,y}$, we consider the Carath\'eodory equation 
\begin{equation}\label{eq:caratheodory_L}
	\dot{u}_{\xi}(s)=L(\xi(s),u_{\xi}(s),\dot{\xi}(s)),\quad a.e.\ s\in[0,t]
\end{equation}
with initial conditions $u_{\xi}(0)=u$. We define
\begin{equation}\label{eq:fundamental_solution}
	A(t,x,y,u)=u+\inf_{\xi}\int^t_0L(\xi(s).u_{\xi}(s),\dot{\xi}(s))\ ds,
\end{equation}
where the infinmum is taken over of $\xi\in\Gamma^t_{x,y}$ and $u_{\xi}:[0,t]\to\R^n$ is a absolutely continuous curve determined by \eqref{eq:caratheodory_L}. In the case of discounted equations, $L(x,u,v)=-\lambda u+L(x,v)$. One can define the negative type Lax-Oleinik operator  $T^-_t:C(\R^n,\R)\to C(\R^n,\R)$ for any $t>0$:
\begin{equation}\label{eq:LL-}
	(T^-_t\phi)(x)=\inf_{y\in\R^n}A(t,y,x,\phi(y)).
\end{equation}
It is not very difficult to show the fundamental solution $A(t,x,y,u)$ is locally semiconcave with constants depending on $|x-y|/t$ and $\lambda$. Thus, the key point of the uniform semiconcavity of $u^{\lambda}$ is to show that there exists $\kappa_0>0$ independent of $x$ such that the minimum of $A(t,\cdot,x,\phi(\cdot))$ on $\R^n$ is attained and all the minimum points is contained in $B(x,\kappa_0t)$. If $\phi=u^{\lambda}$ is a Lipschitz weak KAM solution of the equation $H(x,u(x),Du(x))=0$ with respect to $H(x,u,p)=\lambda u+H(x,p)$, then this gives a uniform bound of the velocity all of backward calibrated curves which leads to the uniform constants in the associated semiconcavity estimate. We will answer Problem above in a much more general context in the future.
\end{Rem}

\subsection{Connection to singularities} The intrinsic Lasry-Lions regularization is closely connected to the propagation of singularities of the solution $u^{\lambda}$ of \eqref{eq:discount}. It is obvious that $u^{\lambda}$ shares the singularities of $v^{\lambda}$ in \eqref{eq:HJ_evol}. In this section, we suppose that $0$ is Ma\~n\'e's critical value.

Recall that a point $(t,x)\in\R\times\R^n$ is called a {\em singular point} of a semiconcave function $u(t,x)$ if $D^+u(t,x)$ is not a singleton. The set of all singular points of $u$ is denoted by $\text{Sing}\,(u)$. It is obvious that $(t,x)\in\text{Sing}\,(v^{\lambda})$ if and only if $x\in\text{Sing}\,(u^{\lambda})$.

Using the representation formula of $v^{\lambda}$ (see, for instance, \cite[Proposition 3.5]{DFIZ}), for any $\tau\in\R$, we obtain that
\begin{equation}
	v^{\lambda}(\tau,x)=e^{\lambda \tau}u^{\lambda}(x)=\inf_{\gamma}\int^{\tau}_{-\infty}L^{\lambda}(s,\gamma(s),\dot{\gamma}(s))\ ds,
\end{equation}
where the infimum is taken over all absolutely continuous curves $\gamma:(-\infty,\tau]\to\R^n$, with $\gamma(\tau)=x$. Moreover, there exists a Lipschitz and $C^2$ curve $\gamma_x:(-ˆ'\infty,\tau]\to\R^n$, with $\gamma_x(\tau)=x$, such that, for any $t>\tau$,
\begin{equation}\label{eq:calibration}
	v^{\lambda}(\tau,x)=v^{\lambda}(\tau-t,\gamma_x(\tau-t))+\int^{\tau}_{\tau-t}L^{\lambda}(s,\gamma_x(s),\dot{\gamma}_x(s))\ ds.
\end{equation}
It is clear that $v^{\lambda}$ is differentiable at $(\tau-t,\gamma_x(\tau-t))$ for all $t>\tau$, and $\gamma_x$ is an extremal of the associated Euler-Lagrange equation with respect to $L^{\lambda}$. As in the classical weak KAM theory, it is not difficult to prove that {\em $(\tau,x)$ is a differentiable point of $v^{\lambda}$ if and only if there exists a unique $\gamma_x$ satisfying \eqref{eq:calibration}} (see also \cite[Theorem 6.4.9]{Cannarsa-Sinestrari}).

\begin{The}
	Let $x_0\in\mbox{\rm Sing}\,(u^{\lambda})$, then any maximizer $y_{t,x_0}$ of $v^{\lambda}(t_0,\cdot)-A^{\lambda}_{0,t}(x_0,\cdot)$ is contained in $\mbox{\rm Sing}\,(u^{\lambda})$ for all $t>0$ and there exists $t_1>0$ such that the map $t\mapsto y_{t,x_0}$, the maximizers with respect to $v^{\lambda}(t_0,\cdot)-A^{\lambda}_{0,t}(x_0,\cdot)$ for $0<t<t_1$, is continuous. Moreover, the right derivative $\frac d{dt}y_{t,x_0}\vert_{t=0^+}$ exists and it is equal to $q^{\lambda}_{x_0}$ as in Theorem \ref{discount_LL}, i.e., $q^{\lambda}_{x_0}$ is the unique element in $D_y^+v^{\lambda}(t_0,x_0)$ such that
\begin{equation}\label{eq:w_x1}
	H(x_0,q^{\lambda}_{x_0})=\min_{p\in D_x^+v^{\lambda}(t_0,x_0)}H(x_0,p).
\end{equation}
\end{The}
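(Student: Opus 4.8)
The statement concerns the solution $v^\lambda$ of \eqref{eq:HJ_evol}, and $\mathrm{Sing}(v^\lambda)=\R\times\mathrm{Sing}(u^\lambda)$; we may take $t_0=0$, so that $\bar y_t:=y_{t,x_0}$ is the point realizing $\hat T_tu^\lambda(x_0)$ of Theorem \ref{discount_LL}. Fix a minimizer $\xi_t:[0,t]\to\R^n$ for $A^\lambda_{0,t}(x_0,\bar y_t)$ with dual arc $p_t$; then $p_t(t)=D_yA^\lambda_{0,t}(x_0,\bar y_t)$ and $p_t(0)=-D_xA^\lambda_{0,t}(x_0,\bar y_t)=D\hat T_tu^\lambda(x_0)$ by \eqref{eq:diff_A_t_x} and \eqref{eq:diff_A_t_y}, while Theorem \ref{discount_LL} gives $\hat T_tu^\lambda\in C^{1,1}_{loc}$, $\bar y_t\to x_0$ and $D\hat T_tu^\lambda(x_0)\to q^\lambda_{x_0}$ as $t\to0^+$, and Lemma \ref{sup_max} gives $|\bar y_t-x_0|\leqslant\kappa_0 t$ for all $t>0$.

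First I would prove $\bar y_t\in\mathrm{Sing}(u^\lambda)$ for every $t>0$, by contradiction. If $u^\lambda$ were differentiable at $\bar y_t$, then maximality of $\bar y_t$ forces $D^+A^\lambda_{0,t}(x_0,\bar y_t)\subset D^+u^\lambda(\bar y_t)=\{Du^\lambda(\bar y_t)\}$; since $A^\lambda_{0,t}(x_0,\cdot)$ is (locally) semiconcave it is then differentiable at $\bar y_t$, the minimizer $\xi_t$ is unique, and $p_t(t)=Du^\lambda(\bar y_t)$. One then has to transport this uniqueness back along $\xi_t$ and conclude that $D^+u^\lambda(x_0)$ is a singleton, contradicting $x_0\in\mathrm{Sing}(u^\lambda)$. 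I expect this transport to be the main obstacle. The difficulty is that the discount weight $e^{\lambda s}$ in $L^\lambda$ makes the dual momentum carried by $\xi_t$ at $\bar y_t$ equal to $Du^\lambda(\bar y_t)$ rather than to $e^{\lambda t}Du^\lambda(\bar y_t)=D_xv^\lambda(t,\bar y_t)$, so $\xi_t$ does \emph{not} extend to a calibrated curve of $v^\lambda$ and the classical gluing argument does not apply verbatim. One must instead run the propagation-of-singularities argument for positive Lax-Oleinik operators of \cite{Cannarsa-Cheng3,Cannarsa-Chen-Cheng} directly on the intrinsic kernel $A^\lambda_{0,t}$, relying on the time-dependent Tonelli regularity established above (the generalization of the relevant result in \cite{Cannarsa-Cheng3}); equivalently, one may recast $\hat T_t$ as the positive Lax-Oleinik operator attached to the autonomous contact (Carath\'eodory) Lagrangian $-\lambda r+L(x,v)$ of the Remark, for which $u^\lambda$ is a weak KAM solution and the needed propagation results are available in the spirit of \cite{Cannarsa-Cheng-Yan}.

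For the continuity, fix $t_1\in(0,T]$ so small that condition \eqref{eq:Max} holds on $(0,t_1)$ (a slight generalization of \cite[Lemma 3.1]{Cannarsa-Cheng3}, as in the proof of Theorem \ref{discount_LL}); then $\bar y_t$ is the unique maximizer there, and if $t_k\to t_*\in(0,t_1)$ the bound $|\bar y_{t_k}-x_0|\leqslant\kappa_0 t_k$ together with the continuity of $t\mapsto A^\lambda_{0,t}(x_0,\cdot)$ (uniform on the relevant balls, by Tonelli's theorem) shows that every limit point of $\{\bar y_{t_k}\}$ maximizes $u^\lambda(\cdot)-A^\lambda_{0,t_*}(x_0,\cdot)$, hence equals $\bar y_{t_*}$, giving full convergence; with $\bar y_t\to x_0$ this makes $t\mapsto\bar y_t$ continuous on $[0,t_1)$ with $\bar y_0=x_0$. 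Finally, for the right derivative, write $\bar y_t-x_0=\xi_t(t)-\xi_t(0)$; since $\{\dot\xi_t\}_{t\in(0,t_1]}$ is equi-Lipschitz (Lemma \ref{sup_max} and Proposition \ref{compactness_condition}), arguing as in \eqref{eq:1} gives $\bigl|\tfrac{\bar y_t-x_0}{t}-\dot\xi_t(0)\bigr|\leqslant\tfrac C2\,t\to0$, while $\dot\xi_t(0)=H_p\bigl(x_0,D\hat T_tu^\lambda(x_0)\bigr)\to H_p(x_0,q^\lambda_{x_0})$ by \eqref{eq:diff_A_t_x} and Theorem \ref{discount_LL}. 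Hence $\tfrac d{dt}y_{t,x_0}\big|_{t=0^+}=\lim_{t\to0^+}\dot\xi_t(0)$ exists and, under the Legendre transform $L_v(x_0,\cdot)$, corresponds to the distinguished element $q^\lambda_{x_0}\in D^+_yv^\lambda(t_0,x_0)$ characterized by \eqref{eq:w_x1}, as claimed.
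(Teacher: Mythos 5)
Your continuity argument and your computation of the right derivative are sound and essentially the paper's: the paper extracts the continuous selection $t\mapsto y_{t,x_0}$ from strict concavity of the barrier for small $t$ (Proposition \ref{convexity_A_t} plus the semiconcavity of $u^{\lambda}$) rather than from uniqueness of the maximizer and a limit-point argument, but the two routes are interchangeable, and your remark that the right derivative is $H_p(x_0,q^{\lambda}_{x_0})$, i.e.\ equals $q^{\lambda}_{x_0}$ only after Legendre duality, is if anything more precise than the statement. The genuine gap is in the first and central claim, that every maximizer $y_{t,x_0}$ lies in $\mbox{\rm Sing}\,(u^{\lambda})$. You reduce it correctly to the point where $A^{\lambda}_{0,t}(x_0,\cdot)$ is differentiable at $y_{t,x_0}$ with a unique minimizer, you then name the remaining step (``transport this uniqueness back along $\xi_t$'') as the main obstacle, and you defer it to \cite{Cannarsa-Cheng3,Cannarsa-Chen-Cheng,Cannarsa-Cheng-Yan} without carrying it out. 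Since this is precisely the assertion the theorem is about, the proposal does not prove it.

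The paper closes this step by a gluing argument which you should execute rather than outsource. Assuming $v^{\lambda}(\cdot,\cdot)$ is differentiable in $x$ at the maximizer, maximality and semiconcavity of the kernel give
$p_{t,x_0}=D_yv^{\lambda}(\cdot,y_{t,x_0})=D_yA^{\lambda}_{0,t}(x_0,y_{t,x_0})=L^{\lambda}_v(t,\xi_{t,x_0}(t),\dot{\xi}_{t,x_0}(t))$ by \eqref{eq:diff_A_t_y}. One then takes a backward calibrated curve $\gamma_{x_0}:(-\infty,t]\to\R^n$ for $v^{\lambda}$ as in \eqref{eq:calibration} with $\gamma_{x_0}(t)=y_{t,x_0}$; the calibration identity gives that its $L^{\lambda}$-momentum at time $t$ is $D_xv^{\lambda}(t,y_{t,x_0})$, so $\xi_{t,x_0}$ and $\gamma_{x_0}$ are extremals of the same Euler--Lagrange flow sharing position and momentum at time $t$, hence coincide on $[0,t]$. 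Therefore $\gamma_{x_0}(0)=x_0$, the point $(0,x_0)$ is interior to a backward calibrated curve, and $v^{\lambda}$ is differentiable there, contradicting $x_0\in\mbox{\rm Sing}\,(u^{\lambda})$. The $e^{\lambda t}$ mismatch you identify is a legitimate observation, but it concerns only the normalization of the barrier: the momenta match exactly when the time slice of $v^{\lambda}$ appearing in the barrier is the terminal time $t$ of the kernel (which is how the paper's proof reads the statement, taking $\gamma_{x_0}$ to end at time $t$), whereas with your normalization $v^{\lambda}(0,\cdot)=u^{\lambda}$ they differ by the factor $e^{\lambda t}$. The correct reaction is to fix the normalization and run the gluing, not to conclude that the argument fails and point elsewhere; as written, the proposal leaves the propagation-of-singularities claim unproved.
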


\begin{proof}
	Fix $t_0\in\R$, then $(t_0,x_0)\in\mbox{\rm Sing}\,(v^{\lambda})$ since $x_0\in\mbox{\rm Sing}\,(u^{\lambda})$. For any $t>0$ and $y_{t,x_0}\in\arg\max\{v^{\lambda}(t_0,\cdot)-A^{\lambda}_{0,t}(x_0,\cdot)\}$ (which is nonempty since $v^{\lambda}(t_0,\cdot)$ is Lipschitz and Lemma \ref{sup_max}), suppose $y_{t,x_0}$ is a differentiable point of $v^{\lambda}(t_0,\cdot)$. Thus 
	$$
	0\in D^+\{v^{\lambda}(t_0,\cdot)-A^{\lambda}_{0,t}(x_0,\cdot)\}(y_{t,x_0})=D_yv^{\lambda}(t_0,y_{t,x_0})-D^-\{A^{\lambda}_{0,t}(x_0,\cdot)\}(y_{t,x_0}),
	$$
	equivalently, $D_yv^{\lambda}(t_0,y_{t,x_0})\in D^-\{A^{\lambda}_{0,t}(x_0,\cdot)\}(y_{t,x_0})$. It follows that $A^{\lambda}_{0,t}(x_0,\cdot)$ is differentiable at $y_{t,x_0}$ and
	$$
	p_{t,x_0}=D_yv^{\lambda}(t_0,y_{t,x_0})=D_yA^{\lambda}_{0,t}(x_0,y_{t,x_0})
	$$
	since $A^{\lambda}_{0,t}(x_0,\cdot)$ is locally semiconcave (see, for instance, \cite{Cannarsa-Sinestrari}). Therefore, there exists two $C^2$ curves $\xi_{t,x_0}:[0,t]\to\R^n$ and $\gamma_{x_0}:(-\infty,t]\to\R^n$ such that $\xi_{t,x_0}(0)=x_0$, $\gamma_{x_0}(t)=\xi_{t,x_0}(t)=y_{t,x_0}$ and $p_{t,x_0}=L_v(\gamma_{x_0}(t),\dot{\gamma}_{x_0}(t))=L_v(\xi_{t,x_0}(t),\dot{\xi}_{t,x_0}(t))$. Since $\xi_{t,x_0}$ and $\gamma_{x_0}$ has the same endpoint condition at $t$, then they coincide on $[0,t]$. Thus, $x_0=\gamma_{x_0}(0)$ and $(0,x_0)$ is a differentiable point of $v^{\lambda}$ since $\gamma_{x_0}$ is a backward calibrated curve by \eqref{eq:calibration}. On the other hand, $(0,x_0)$ is contained in $\mbox{\rm Sing}\,(v^{\lambda})$ since $(t_0,x_0)\in\mbox{\rm Sing}\,(v^{\lambda})$. This leads to a contradiction.
	
	To prove \eqref{eq:w_x1}, we need a slight modification of Theorem \ref{main result}. Notice that $v^{\lambda}(t,\cdot)$ is equi-Lipschitz and equi-semiconcave for $t\in[0,t_1]$. By the regularity properties of the fundamental solutions,  $v^{\lambda}(t_0,\cdot)-A^{\lambda}_{0,t}(x_0,\cdot)$ is strictly concave for $t\in(0,t_2]$, where $t_2\leqslant t_1$ is determined by Proposition \ref{convexity_A_t} and the semiconcavity of $u^{\lambda}$. Therefore, $t\mapsto y_{t,x_0}$ is a continuous selection since the function $v^{\lambda}(t_0,\cdot)-A^{\lambda}_{0,t}(x_0,\cdot)$ is continuos. By the same argument as in the proof of Theorem \ref{main result}, we obtain that
	$$
	\frac d{dt}y_{t,x_0}\vert_{t=0^+}=q^{\lambda}_{x_0}
	$$
	with $q^{\lambda}_{x_0}$ satisfying \eqref{eq:w_x1}.
\end{proof}

\appendix

\section{Regularity properties of fundamental solutions}
Here we collect some relevant regularity results with respect to the fundamental solutions of \eqref{eq:HJ_evolution} on $\R^n$. The proofs of these regularity results are similar to those in \cite{Cannarsa-Cheng3} in the time-independent case. The difference is that that we need an extra condition (L3) to ensure the uniform Lipschitz estimate of the minimizers in the relevant Tonelli-like variational problem. We omit the proof.

\begin{Pro}\label{Main_bound_Lem}
Let $a\leqslant s<t\leqslant b,R>0$ and  suppose $L$ satisfies condition {\rm (L1)-(L3)}. Given  any $x\in\R^n$ and $y\in \overline{B}(x,R)$, let $\xi\in\Gamma^{s,t}_{x,y}$ be a minimizer for $A_{s,t}(x,y)$ and let $p(\cdot)$ be the dual arc. Then we have that
\begin{align*}%\label{eq:main_bound}
	\sup_{\tau\in[s,t]}|\dot{\xi}(\tau)|\leqslant \kappa_T(R/(t-s)),\quad \sup_{\tau\in[s,t]}|p(\tau)|\leqslant \kappa_T(R/(t-s))
\end{align*}
and
\begin{equation*}
%\label{eq:main_bound_0}
\sup_{\tau\in[s,t]}|\xi(\tau)-x|\leqslant\kappa_T(R/(t-s)),
\end{equation*}
where $\kappa_T:(0,\infty)\to(0,\infty)$ is nondecreasing and $T=b-a$.
\end{Pro}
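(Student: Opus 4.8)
The plan is to reduce the three estimates to a single a priori bound --- an $L^{\infty}$ bound on the velocity $\dot\xi$ of the minimizer --- and deduce the rest from it. Indeed, once $\sup_{\tau\in[s,t]}|\dot\xi(\tau)|\le K$, conditions (L1)--(L2) force $L_v(\tau,\cdot,\cdot)$ to be bounded on $\{|v|\le K\}$ uniformly in $\tau$ and in the base point (a standard convexity estimate: taking $w=L_v/|L_v|$, convexity gives $|L_v(\tau,z,v)|\le L(\tau,z,v+w)-L(\tau,z,v)\le\bar\theta(|v|+1)-\theta(|v|)+c_0$), whence $\sup_\tau|p(\tau)|=\sup_\tau|L_v(\tau,\xi(\tau),\dot\xi(\tau))|$ is controlled by a nondecreasing function of $K$; and $|\xi(\tau)-x|=|\xi(\tau)-\xi(s)|\le(t-s)K\le TK$. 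So it suffices to bound $\sup_\tau|\dot\xi(\tau)|$ by a nondecreasing function of $A:=R/(t-s)$, allowed to depend on $T=b-a$.

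First I would bound the action from above by testing \eqref{fundamental_solution} against the affine curve $\eta(\tau)=x+\tfrac{\tau-s}{t-s}(y-x)$, for which $|\dot\eta|\equiv|y-x|/(t-s)\le A$; by the upper bound in (L2), $A_{s,t}(x,y)\le(t-s)\bar\theta(A)$. Combined with the superlinear lower bound $L\ge\theta(|v|)-c_0$ this gives $\int_s^t\theta(|\dot\xi(\tau)|)\,d\tau\le(t-s)(\bar\theta(A)+c_0)$, so by the mean value theorem there is a ``good time'' $\tau_0\in[s,t]$ with $\theta(|\dot\xi(\tau_0)|)\le\bar\theta(A)+c_0$, hence $|\dot\xi(\tau_0)|\le\Lambda_0(A)$ for a nondecreasing $\Lambda_0$. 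As above this bounds $|p(\tau_0)|$, and therefore the energy $e(\tau):=H(\tau,\xi(\tau),p(\tau))=\langle p(\tau),\dot\xi(\tau)\rangle-L(\tau,\xi(\tau),\dot\xi(\tau))$ satisfies $e(\tau_0)\le E_0(A)$ with $E_0$ nondecreasing. (Here I tacitly monotonize $\theta,\bar\theta$, which is harmless.)

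The heart of the matter is to propagate the bound on $e$ from $\tau_0$ to all of $[s,t]$, and this is exactly where the time-dependence forces the use of (L3). Since $\xi$ is a $C^2$ extremal with dual arc $p$, the pair $(\xi,p)$ solves Hamilton's equations and $\tfrac{d}{d\tau}e(\tau)=\partial_tH(\tau,\xi(\tau),p(\tau))=-L_t(\tau,\xi(\tau),\dot\xi(\tau))$. Bounding the right-hand side by (L3) as $|L_t|\le c\,(1+L(\tau,\xi,\dot\xi))$ and using $L(\tau,\xi,\dot\xi)=\langle p,\dot\xi\rangle-e=\langle p,H_p(\tau,\xi,p)\rangle-e$, one controls $\langle p,H_p\rangle$ via the convexity of $H$ in $p$ together with the uniform superlinear growth bounds on $H$ furnished by (L2), and thereby obtains a Gronwall-type differential inequality for $e$ along the extremal. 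Integrating it over $[s,t]$ --- an interval of length at most $T$ --- from the datum $e(\tau_0)\le E_0(A)$ gives $\sup_{\tau\in[s,t]}|e(\tau)|\le\Lambda_1(A)$ with $\Lambda_1$ nondecreasing and depending on $T$. I expect this to be the main obstacle: in the autonomous case of \cite{Cannarsa-Cheng3} the energy is conserved and the step is vacuous, whereas here one must control the drift of the energy over a time span of length $T$, which is precisely what (L3) achieves, and why $\kappa_T$ must depend on $T$.

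Finally, by superlinearity of $\bar\theta$ one has $H(\tau,x,p)\to+\infty$ uniformly in $(\tau,x)$ as $|p|\to\infty$, so the energy bound yields $\sup_\tau|p(\tau)|\le\Lambda_2(A)$; then $\sup_\tau|\dot\xi(\tau)|=\sup_\tau|H_p(\tau,\xi(\tau),p(\tau))|\le\Lambda_3(A)$ by the uniform local boundedness of $H_p$ (the Hamiltonian analogue of the estimate on $L_v$ in the first paragraph), and $\sup_\tau|\xi(\tau)-x|\le T\Lambda_3(A)$ as noted. Taking $\kappa_T$ to be the nondecreasing maximum of $\Lambda_2$, $\Lambda_3$ and $T\Lambda_3$ finishes the proof. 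Apart from the energy-drift step, the scheme runs parallel to the autonomous estimates of \cite{Cannarsa-Cheng3}.
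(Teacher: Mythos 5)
The paper does not actually print a proof of this proposition (it is stated as ``omitted,'' with a pointer to \cite{Cannarsa-Cheng3} and the remark that (L3) supplies the uniform Lipschitz estimate in the time-dependent case), so there is nothing to compare line by line; but your scheme --- comparison curve to bound the action, a ``good time'' $\tau_0$ where the velocity and hence the energy are controlled, propagation of the energy bound over $[s,t]$ using $\frac{d}{d\tau}H(\tau,\xi,p)=-L_t(\tau,\xi,\dot\xi)$ together with (L3), and then the superlinearity of $H$ in $p$ to recover $|p|$ and $|\dot\xi|$ --- is exactly the standard argument the authors are alluding to, and you have correctly identified where and why (L3) enters and why $\kappa_T$ must depend on $T$.

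The one step that is not airtight as written is the propagation step. You propose to bound $1+L=1+\langle p,H_p\rangle-e$ pointwise by a nondecreasing function $G(e)$ of the energy and then ``integrate the Gronwall-type inequality $|e'|\leqslant c\,G(e)$ over $[s,t]$.'' The available convexity estimates give $|p|\leqslant(\bar\theta^*)^{-1}(e)$ and $|H_p(\tau,x,p)|\leqslant\theta^*(|p|+1)+c_0-\bar\theta^*(|p|)$, and when $\theta$ and $\bar\theta$ have very different growth (say $\theta(r)=r^2$ but $\bar\theta(r)=e^{r^2}$) the resulting $G$ is superlinear in $e$; the comparison ODE $\dot E=c\,G(E)$ then blows up in finite time, and the blow-up time shrinks as $E_0(A)$ grows, so this does not yield a bound valid for all $A$ even with $T$ fixed. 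The repair is immediate and stays inside your framework: do not close the inequality pointwise, but integrate (L3) directly along the minimizer, using that $\xi$ minimizes \eqref{fundamental_solution}:
\begin{equation*}
\int_s^t|L_t(\tau,\xi,\dot\xi)|\,d\tau\;\leqslant\;c\Bigl((t-s)+\int_s^tL(\tau,\xi,\dot\xi)\,d\tau\Bigr)\;=\;c\bigl((t-s)+A_{s,t}(x,y)\bigr)\;\leqslant\;c(t-s)\bigl(1+\bar\theta(A)\bigr),
\end{equation*}
which bounds the total variation of $e$ on $[s,t]$ by $cT(1+\bar\theta(A))$ with no Gronwall argument at all, and hence $\sup_\tau e(\tau)\leqslant E_0(A)+cT(1+\bar\theta(A))$. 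With this substitution (and the harmless monotonization of $\theta,\bar\theta$ you already flag), the rest of your argument --- $\bar\theta^*(|p|)\leqslant e$ to bound $|p|$, the dual convexity estimate to bound $|\dot\xi|=|H_p|$, and $|\xi(\tau)-x|\leqslant T\sup|\dot\xi|$ --- goes through and completes the proof.
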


Fix $x\in\R^n$ and suppose $R>0$ and $L$ is a Tonelli Lagrangian.  For any $a\leqslant s<t\leqslant b$, $T=b-a$ and $y\in\overline{B}(x,R)$, let $\xi \in\Gamma^{s,t}_{x,y}$ be a minimizer for $A_{s,t}(x,y)$ and let $p$ be its dual arc. Then there exists a nondecreasing function $\kappa_T:(0,\infty)\to(0,\infty)$ such that
\begin{equation*}
\label{eq:main_bound_not}
\sup_{\tau\in[s,t]}|\dot{\xi}(\tau)|\leqslant\kappa_T(R/(t-s)),\quad
	\sup_{\tau\in[s,t]}|p(\tau)|\leqslant\kappa_T(R/(t-s)),
\end{equation*}
by Proposition \ref{Main_bound_Lem}. Now, $a<b$, $x\in\R^n$ and $\lambda>0$  define  compact sets
\begin{equation*}%\label{eq:K}
	\begin{split}
	\mathbf{K}_{a,b,x,\lambda}&:=[a,b]\times\overline{B}(x,\kappa(4\lambda))\times\overline{B}(0,\kappa(4\lambda))\subset\R\times\R^n\times\R^n,\\
	\mathbf{K}^*_{a,b,x,\lambda}&:=[a,b]\times\overline{B}(x,\kappa(4\lambda))\times\overline{B}(0,\kappa(4\lambda))\subset\R\times\R^n\times(\R^n)^*.
	\end{split}
\end{equation*}
The following  is one of the key technical points.

\begin{Pro}\label{compactness_condition}
Suppose $L$ is a Tonelli Lagrangian. Fix $x\in\R^n$, $\lambda>0$, $s<t$, $T=t-s<1$ and $y\in B(x,\lambda T)$. Let $z\in\R^n$ and $h\in\R$ be such that
\begin{equation*}%\label{eq:<<1}
|z|<\lambda T\qquad\mbox{and}\qquad s-\frac {T}2<h<1-T.
\end{equation*}
Then  any minimizer
$\xi\in\Gamma^{s,t+h}_{x,y+z}$ for $A_{s,t+h}(x,y+z)$ and  corresponding  dual arc $p$ satisfy the following inclusions
\begin{align*}
	\{(\tau,\xi(\tau),\dot{\xi}(\tau)):\tau\in[s,t+h]\}&\subset \mathbf{K}_{s,s+1,x,\lambda},\\
	\{(\tau,\xi(\tau),p(\tau)):\tau\in[s,t+h]\}&\subset \mathbf{K}^*_{s,s+1,x,\lambda}.
\end{align*}
\end{Pro}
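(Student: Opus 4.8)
The plan is to obtain everything from a single application of Proposition~\ref{Main_bound_Lem} carried out on the fixed reference interval $[s,s+1]$, so that one and the same modulus $\kappa=\kappa_1$ governs all the bounds. First I would dispose of the time factor: since $t+h=s+T+h$ and $h<1-T$, we get $t+h<s+1$, while the lower bound on $h$ forces $(t+h)-s=T+h>T/2>0$; hence the interval of definition of $\xi$ satisfies $[s,t+h]\subset[s,s+1]$. In particular $\tau\in[s,s+1]$ for every $\tau$ along $\xi$, which already accounts for the first coordinate of both $\mathbf{K}_{s,s+1,x,\lambda}$ and $\mathbf{K}^*_{s,s+1,x,\lambda}$.

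Next I would estimate the normalized endpoint displacement. By the triangle inequality, $y\in B(x,\lambda T)$ and $|z|<\lambda T$ give
$$
|(y+z)-x|\leqslant|y-x|+|z|<2\lambda T ,
$$
so $y+z\in\overline{B}(x,2\lambda T)$; combining this with $(t+h)-s=T+h>T/2$ yields the key inequality
$$
\frac{|(y+z)-x|}{(t+h)-s}<\frac{2\lambda T}{T/2}=4\lambda .
$$
Now apply Proposition~\ref{Main_bound_Lem} to the minimizer $\xi\in\Gamma^{s,t+h}_{x,y+z}$ on $[s,t+h]$, taking $a=s$ and $b=s+1$ (so that its interval parameter equals $b-a=1$ and its modulus is $\kappa_1=\kappa$) and radius $R=2\lambda T$; the admissibility condition $a\leqslant s<t+h\leqslant b$ holds by the first step. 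Since $\kappa$ is nondecreasing, the displacement bound above gives
$$
\sup_{\tau\in[s,t+h]}|\dot{\xi}(\tau)|\leqslant\kappa\!\left(\frac{R}{(t+h)-s}\right)\leqslant\kappa(4\lambda),
$$
and likewise $\sup_{\tau\in[s,t+h]}|p(\tau)|\leqslant\kappa(4\lambda)$ and $\sup_{\tau\in[s,t+h]}|\xi(\tau)-x|\leqslant\kappa(4\lambda)$. Hence $\xi(\tau)\in\overline{B}(x,\kappa(4\lambda))$, $\dot{\xi}(\tau)\in\overline{B}(0,\kappa(4\lambda))$ and $p(\tau)\in\overline{B}(0,\kappa(4\lambda))$ for all $\tau\in[s,t+h]$, which together with $\tau\in[s,s+1]$ yields the two asserted inclusions.

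No step here is genuinely difficult; the proposition is essentially a bookkeeping device, and the margins built into the hypotheses on $z$ and $h$ are exactly what is needed so that perturbing the endpoint by $z$ and the final time by $h$ keeps the normalized displacement below the threshold $4\lambda$. The only point deserving attention is the decision to invoke Proposition~\ref{Main_bound_Lem} on the fixed interval $[s,s+1]$ rather than on the genuine interval $[s,t+h]$: this is precisely what makes the resulting compact sets $\mathbf{K}_{s,s+1,x,\lambda}$ and $\mathbf{K}^*_{s,s+1,x,\lambda}$ independent of $t$, $h$ and $z$, i.e.\ the uniformity on which the compactness arguments elsewhere in the paper rely.
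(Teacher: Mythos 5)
Your argument is correct and is exactly the intended one: the paper itself omits the proof of this proposition (deferring to the analogous time-independent statement in \cite{Cannarsa-Cheng3}), and the standard argument is precisely your single application of Proposition~\ref{Main_bound_Lem} on the reference interval $[s,s+1]$ with $R=2\lambda T$ and $(t+h)-s>T/2$, which yields the threshold $4\lambda$ and the $t,h,z$-independent compact sets. The one point worth flagging explicitly rather than passing over: you silently read the hypothesis $s-\tfrac{T}{2}<h$ as $-\tfrac{T}{2}<h$; that is indeed the intended (and needed) condition, since with the spurious $s$ the inequality $T+h>T/2$ you rely on would fail for general $s$, so this is a typo in the statement rather than a gap in your proof.
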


\begin{Pro}\label{semiconcave_A_t}
Suppose $L$ is a Tonelli Lagrangian. Then for any $\lambda>0$ there exists a constant $C_\lambda>0$ such that for any $x\in\R^n$, $s<t$ with $T=t-s<2/3$, $y\in B(x,\lambda T)$, and $(h,z)\in\R\times\R^n$  satisfying $|h|<T/2$ and $|z|<\lambda T$ we have
\begin{equation}\label{eq:seminconcavity_A_t}
A_{s,t+h}(x,y+z)+A_{s,t-h}(x,y-z)-2A_{s,t}(x,y)\leqslant\frac {C_\lambda}T\big(|h|^2+|z|^2\big).
\end{equation}
Consequently, $(t,y)\mapsto A_{s,t}(x,y)$ is locally semiconcave in $(0,1)\times\R^n$, uniformly with respect to $x$ and $s$.
\end{Pro}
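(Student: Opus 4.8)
The plan is to derive the one-sided second-difference estimate \eqref{eq:seminconcavity_A_t} straight from the variational definition \eqref{fundamental_solution}, by producing for the displaced data $(t\pm h,\,y\pm z)$ competitor curves obtained from a single minimizer for $A_{s,t}(x,y)$ through a \emph{symmetric} modification — an affine reparametrization of the time interval together with an affine correction of the space endpoint.

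First I would fix a minimizer $\xi\in\Gamma^{s,t}_{x,y}$ for $A_{s,t}(x,y)$; it is of class $C^2$ by the classical theory, and since $T<2/3$ and $|h|<T/2$ force $[s,t+h]\subset[s,s+1]$ (together with $|z|<\lambda T$ and $y\in B(x,\lambda T)$), Proposition~\ref{compactness_condition} confines $\{(\tau,\xi(\tau),\dot\xi(\tau))\}$ to a compact set on which $L$ and its first two derivatives are bounded by a constant depending only on $\lambda$ and $L$. For $\sigma\in\{+,-\}$ let $r_\sigma:[s,t+\sigma h]\to[s,t]$ be the affine bijection, so that $\dot r_\sigma=T/(T+\sigma h)\in(2/3,2)$, and set
\begin{equation*}
\xi_\sigma(\tau):=\xi\big(r_\sigma(\tau)\big)+\sigma\,\frac{r_\sigma(\tau)-s}{T}\,z,\qquad \tau\in[s,t+\sigma h].
\end{equation*}
Then $\xi_\sigma(s)=x$, $\xi_\sigma(t+\sigma h)=y+\sigma z$, so $\xi_\sigma\in\Gamma^{s,t+\sigma h}_{x,y+\sigma z}$, and one checks $\xi_\sigma$ stays inside a slightly enlarged compact set still controlled by $\lambda$; hence $A_{s,t+\sigma h}(x,y+\sigma z)\le\int_s^{t+\sigma h}L(\tau,\xi_\sigma(\tau),\dot\xi_\sigma(\tau))\,d\tau$.

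Next I would change variables $r=r_\sigma(\tau)$ in each of these two integrals, bringing them onto $[s,t]$; the integrand becomes $L$ evaluated at a point differing from $(r,\xi(r),\dot\xi(r))$ by $O(|h|)$ in time, $O(|z|)$ in position and $O((|h|+|z|)/T)$ in velocity, times the Jacobian $(T+\sigma h)/T=1+\sigma h/T$. Taylor-expanding $L$ to second order on the compact set and forming
\begin{equation*}
A_{s,t+h}(x,y+z)+A_{s,t-h}(x,y-z)-2A_{s,t}(x,y)\le\int_s^t\big[\,\text{sum of the two integrands}-2L(r,\xi(r),\dot\xi(r))\,\big]\,dr,
\end{equation*}
the zeroth-order contribution is $L+L-2L=0$; the first-order contributions cancel when the two terms are added, because the time- and position-perturbations are exactly odd in $(h,z)$, while the only surviving linear term (coming from $L_v$ through the reparametrization factor, which is \emph{not} odd in $h$) equals $\langle L_v,\,(2h^2\dot\xi-2hz)/(T^2-h^2)\rangle$ and, using $T^2-h^2\ge 3T^2/4$ and $|\dot\xi|$ bounded, is $O\big((|h|^2+|z|^2)/T^2\big)$. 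The genuine second-order remainder, and the cross terms $(\sigma h/T)\times(\text{first order})$, are likewise $O\big((|h|^2+|z|^2)/T^2\big)$ with constant depending only on $\lambda$. Integrating this pointwise bound over an interval of length $T$ produces \eqref{eq:seminconcavity_A_t} for a suitable $C_\lambda$.

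For the last assertion, a continuous function satisfying $f(w+\zeta)+f(w-\zeta)-2f(w)\le C|\zeta|^2$ on a convex set is semiconcave there with constant $C$ (a classical characterization, see \cite{Cannarsa-Sinestrari}); applying this in the joint variable $w=(t,y)$ with $\zeta=(h,z)$ gives the semiconcavity of $(t,y)\mapsto A_{s,t}(x,y)$, and on any compact subset of $(0,1)\times\R^n$ one has $T=t-s$ bounded below, so $C_\lambda/T$ is bounded, while enlarging $\lambda$ renders the constraints $y\in B(x,\lambda T)$, $|h|<T/2$, $|z|<\lambda T$ inactive near the point considered; since every constant traced back to Proposition~\ref{compactness_condition} and the $C^2$-bounds of $L$, neither of which depends on the base point $x$ or the initial time $s$, the resulting semiconcavity constant is uniform in $x$ and $s$. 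I expect the main obstacle to be the bookkeeping of the Taylor expansion — specifically, checking that every term not killed by the $\pm$ symmetry is truly of order $(|h|^2+|z|^2)/T^2$ and not merely $(|h|+|z|)/T$ — together with verifying that the competitors $\xi_\sigma$ remain in the compact region where $L$ is controlled.
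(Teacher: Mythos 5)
Your proof is correct and is essentially the argument the paper has in mind (the paper in fact omits the proof, deferring to the time-independent case in \cite{Cannarsa-Cheng3}): one builds competitors for the displaced data by affinely rescaling time and linearly interpolating the endpoint shift along a single minimizer, uses Proposition~\ref{compactness_condition} to confine everything to a compact set determined by $\lambda$, and the $\pm$ symmetry cancels the first-order terms except for the contributions you identify, each of which is $O\big((|h|^2+|z|^2)/T^2\big)$ pointwise and hence $O\big((|h|^2+|z|^2)/T\big)$ after integration. The only point worth making explicit is that your constants require $L$, $DL$ and $D^2L$ to be bounded on sets of the form $[s,s+1]\times\overline{B}(x,\kappa)\times\overline{B}(0,\kappa)$ \emph{uniformly in $x$ and $s$} — a uniformity that conditions (L1)--(L3) leave implicit but which is indispensable for $C_\lambda$ to be independent of the base point, exactly as in the cited reference.
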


\begin{Pro}\label{convexity_A_t}
Suppose $L$ is a Tonelli Lagrangian and, for any $\lambda>0$, there exists  $T_\lambda'>0$such that for any $x\in\R^n$, $s<t$, the function $(t,y)\mapsto A_{s,t}(x,y)$ is semiconvex on the  cone
\begin{equation}
\label{cone}
S_\lambda(x,T_\lambda'):=\big\{(t,y)\in\R\times\R^n~:~T=t-s<T_\lambda',\; |y-x|<\lambda T\big\}\,,
\end{equation}
and there exists a constant $C''_\lambda>0$ such that for all $(t,y)\in S_\lambda(x,T_\lambda')$,  all $h\in[0,T/2)$, and  all $z\in  B(0,\lambda T)$ we have that
\begin{equation}\label{eq:semiconvexity}
A_{s,t+h}(x,y+z)+A_{s,t-h}(x,y-z)-2A_{s,t}(x,y)\geqslant - \frac{C''_ \lambda}{T}(h^2+|z|^2).
\end{equation}
Moreover, there exists $T''_\lambda\in(0,t_\lambda']$ and  $C'''_{\lambda}>0$ such that for all $T\in(0,T''_\lambda]$ the function $A_{s,t}(x,\cdot)$ is uniformly convex on $B(x,\lambda T)$ and   for all $y\in B(x,\lambda T)$ and   $z\in  B(0,\lambda T)$ we have that
\begin{equation}\label{eq:convexity_local}
A_{s,t}(x,y+z)+A_{s,t}(x,y-z)-2A_{s,t}(x,y)\geqslant \frac{C'''_{\lambda}}{T}|z|^2.
\end{equation}
\end{Pro}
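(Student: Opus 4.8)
The plan is to follow the short-time convexity scheme of \cite{Cannarsa-Cheng3}, the only genuinely new point being that condition (L3) has to be invoked to make the a priori estimates uniform in the initial time $s$. Throughout I fix $\lambda>0$, $x\in\R^n$, $s<t$ with $T=t-s$ small, $y\in B(x,\lambda T)$, $z\in B(0,\lambda T)$ and $h\in[0,T/2)$. The first step is to record that, by Propositions \ref{Main_bound_Lem} and \ref{compactness_condition}, every minimizer that will appear --- the minimizers $\xi_1\in\Gamma^{s,t+h}_{x,y+z}$, $\xi_2\in\Gamma^{s,t-h}_{x,y-z}$ and $\xi_\pm\in\Gamma^{s,t}_{x,y\pm z}$ --- lies, together with its dual arc, in one fixed compact set $\mathbf K=\mathbf K_{s,s+1,x,\lambda}$ on which $L$ is $C^2$ with $\|L\|_{C^2(\mathbf K)}\leqslant M$ and $L_{vv}\geqslant cI$; here $M,c>0$ depend only on $\lambda$ and $L$, not on $s$, and the Euler--Lagrange equations give in addition $\sup|\ddot\xi|\leqslant M$ on $\mathbf K$. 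All Taylor expansions below take place on $\mathbf K$.

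For the semiconvexity inequality \eqref{eq:semiconvexity} I would reparametrize $\xi_1$ and $\xi_2$ affinely in time onto $[s,t]$, producing curves $\hat\xi_1,\hat\xi_2$ with $\hat\xi_i(s)=x$, $\hat\xi_1(t)=y+z$, $\hat\xi_2(t)=y-z$ (the time--rescaling factors are $1\pm h/T\in(\tfrac12,\tfrac32)$, so $\hat\xi_1,\hat\xi_2$ remain in $\mathbf K$), and then use $\eta:=\tfrac12(\hat\xi_1+\hat\xi_2)\in\Gamma^{s,t}_{x,y}$ as a competitor for $A_{s,t}(x,y)$. Bounding $2L(\tau,\eta,\dot\eta)\leqslant L(\tau,\eta,\dot{\hat\xi}_1)+L(\tau,\eta,\dot{\hat\xi}_2)$ by convexity of $L$ in $v$, Taylor-expanding each summand back to $(\tau,\hat\xi_i,\dot{\hat\xi}_i)$ in the state variable, and finally undoing the two reparametrizations to recover $A_{s,t+h}(x,y+z)$ and $A_{s,t-h}(x,y-z)$, one obtains $2A_{s,t}(x,y)\leqslant A_{s,t+h}(x,y+z)+A_{s,t-h}(x,y-z)+E$, and the task is then to show $E=O\!\big(\tfrac1T(h^2+|z|^2)\big)$; this is exactly the asserted local semiconvexity of $(t,y)\mapsto A_{s,t}(x,y)$ on the cone $S_\lambda(x,T'_\lambda)$.

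For the uniform convexity \eqref{eq:convexity_local} (which is the case $h=0$, but now with a \emph{positive} lower bound) the competitor is $\eta_0:=\tfrac12(\xi_++\xi_-)\in\Gamma^{s,t}_{x,y}$, and the extra quadratic term has to be produced from (L1). The argument rests on the decomposition
\begin{align*}
A_{s,t}(x,y+z)+A_{s,t}(x,y-z)-2A_{s,t}(x,y)
&\geqslant \int_s^t\!\big[L(\tau,\xi_+,\dot\xi_+)-L(\tau,\eta_0,\dot\xi_+)\big]\,d\tau\\
&\quad+\int_s^t\!\big[L(\tau,\xi_-,\dot\xi_-)-L(\tau,\eta_0,\dot\xi_-)\big]\,d\tau\\
&\quad+\int_s^t\!\big[L(\tau,\eta_0,\dot\xi_+)+L(\tau,\eta_0,\dot\xi_-)-2L(\tau,\eta_0,\dot\eta_0)\big]\,d\tau .
\end{align*}
By strong convexity of $L$ in $v$ the last integral is $\geqslant\tfrac c4\int_s^t|\dot\xi_+-\dot\xi_-|^2\,d\tau=:\tfrac c4 I$; a first-order Taylor expansion of the first two integrands around $\eta_0$, in which the linear parts combine (using $\xi_\pm-\eta_0=\pm\tfrac12(\xi_+-\xi_-)$) into $\tfrac12\int_s^t\langle L_x(\tau,\eta_0,\dot\xi_+)-L_x(\tau,\eta_0,\dot\xi_-),\xi_+-\xi_-\rangle\,d\tau$, shows those integrals are $\geqslant-\tfrac M2\int_s^t|\dot\xi_+-\dot\xi_-|\,|\xi_+-\xi_-|\,d\tau-\tfrac M4\int_s^t|\xi_+-\xi_-|^2\,d\tau$. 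Since $\int_s^t(\dot\xi_+-\dot\xi_-)\,d\tau=2z$, Cauchy--Schwarz gives $I\geqslant 4|z|^2/T$, and the same inequality gives $|\xi_+(\tau)-\xi_-(\tau)|^2\leqslant TI$ for every $\tau\in[s,t]$; hence the two negative integrals are $\geqslant-(\tfrac M2 T+\tfrac M4 T^2)I$, and the left-hand side is $\geqslant(\tfrac c4-\tfrac M2 T-\tfrac M4 T^2)I$. Choosing $T''_\lambda$ small enough (depending only on $c$ and $M$) that the bracket is $\geqslant\tfrac c8$ yields the left-hand side $\geqslant\tfrac c8 I\geqslant\tfrac{c}{2T}|z|^2$, i.e. \eqref{eq:convexity_local} with $C'''_\lambda=c/2$; in particular $A_{s,t}(x,\cdot)$ is uniformly convex on $B(x,\lambda T)$ for $T\leqslant T''_\lambda$.

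The step I expect to be the real obstacle is the estimate $E=O\!\big(\tfrac1T(h^2+|z|^2)\big)$ in the semiconvexity part, specifically the control of the errors coming from the two time-reparametrizations: rescaling a single minimizer produces a term that is only $O(|h|)$, not $O(h^2/T)$, so the two competitor curves cannot be handled separately; one must verify that these first-order-in-$h$ contributions from $\xi_1$ (rescaled by $1+h/T$) and $\xi_2$ (rescaled by $1-h/T$) cancel up to $O\!\big(\tfrac1T(h^2+|z|^2)\big)$, exploiting that $\xi_1$ and $\xi_2$ are $C^1$-close with closeness governed by $|z|$, $|h|$ and $T$. This is precisely the bookkeeping carried out in \cite{Cannarsa-Cheng3} in the time-independent setting, and it goes through here because (L3) keeps all the constants uniform in $s$; the remaining estimates are routine second-order Taylor expansions on $\mathbf K$.
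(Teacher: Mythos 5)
Your proposal is correct and follows essentially the approach the paper intends: the paper omits the proof of this proposition entirely, deferring to \cite{Cannarsa-Cheng3} for the time-independent argument with (L3) supplying uniformity in the initial time $s$, and your midpoint-competitor scheme --- with the strong-convexity gain $\tfrac{c}{4}\int_s^t|\dot\xi_+-\dot\xi_-|^2\,d\tau$ and the Cauchy--Schwarz bounds $I\geqslant 4|z|^2/T$, $|\xi_+-\xi_-|^2\leqslant TI$ absorbing the $O(T)I$ error terms --- is exactly that argument. The one step you defer, the cancellation of the first-order-in-$h$ reparametrization errors between the two outer minimizers, is correctly identified as the only nontrivial bookkeeping and is precisely what the cited reference carries out.
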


\begin{Pro}\label{C11_A_t}
Suppose $L$ is a Tonelli Lagrangian and, for any $\lambda>0$, there exists  $T_\lambda'>0$such that for any $x\in\R^n$ the functions $(t,y)\mapsto A_{s,t}(x,y)$ and $(t,y)\mapsto A_{s,t}(y,x)$ are of class $C^{1,1}_{\text{loc}}$ on the cone $S_{\lambda}(x,T_\lambda')$ defined in \eqref{cone}.
Moreover, for all $(t,y)\in S(x,T_\lambda')$
\begin{align}
D_yA_{s,t}(x,y)=&L_v(t,\xi(t),\dot{\xi}(t)),\label{eq:diff_A_t_y}\\
D_xA_{s,t}(x,y)=&-L_v(s,\xi(s),\dot{\xi}(s)),\label{eq:diff_A_t_x}
\end{align}
where $\xi\in\Gamma^{s,t}_{x,y}$ is the unique minimizer for $A_{s,t}(x,y)$.
\end{Pro}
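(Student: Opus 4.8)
The plan is to obtain Proposition~\ref{C11_A_t} by sandwiching the two one-sided estimates of the preceding propositions and then reading the endpoint-derivative formulas off the classical transversality conditions of Tonelli's calculus of variations. First I would observe that, after taking $T_\lambda'\leqslant 1$ so that $t-s<1$ on the cone, the map $(t,y)\mapsto A_{s,t}(x,y)$ is at once locally semiconcave on $S_\lambda(x,T_\lambda')$, by Proposition~\ref{semiconcave_A_t}, and locally semiconvex there, by Proposition~\ref{convexity_A_t}; since a function that is simultaneously semiconcave and semiconvex on an open set lies in $C^{1,1}_{\mathrm{loc}}$ on that set (see \cite{Cannarsa-Sinestrari}), it follows that $(t,y)\mapsto A_{s,t}(x,y)$ is $C^{1,1}_{\mathrm{loc}}$ on $S_\lambda(x,T_\lambda')$, uniformly in $x$ and $s$. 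For the second map $(t,y)\mapsto A_{s,t}(y,x)$ I would use the time reversal $\tau\mapsto -\tau$, $v\mapsto -v$, under which $L$ becomes $\check L(\tau,z,w):=L(-\tau,z,-w)$: conditions (L1)--(L2) are clearly preserved, and so is (L3) because $\check L_t(\tau,z,w)=-L_t(-\tau,z,-w)$, so $\check L$ is again a Tonelli Lagrangian and $A_{s,t}(y,x)$ equals a fundamental solution of $\check L$ with the two endpoints interchanged; applying Propositions~\ref{semiconcave_A_t} and \ref{convexity_A_t} to $\check L$ then gives the $C^{1,1}_{\mathrm{loc}}$ regularity of $(t,y)\mapsto A_{s,t}(y,x)$ on the corresponding cone.

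For the formulas \eqref{eq:diff_A_t_y}--\eqref{eq:diff_A_t_x} I would argue as follows. On the cone the minimizer $\xi\in\Gamma^{s,t}_{x,y}$ for $A_{s,t}(x,y)$ is unique: when $t-s$ is small, the uniform velocity bound of Proposition~\ref{Main_bound_Lem} confines any minimizer to a region in which no conjugate point occurs, so the minimizer is the unique extremal joining the endpoints and is of class $C^2$. To compute $D_yA_{s,t}(x,y)$, which exists by the first part, I would compare with the affine variation $\xi_h(\tau)=\xi(\tau)+\tfrac{\tau-s}{t-s}h$, which keeps the left endpoint fixed and moves the right one to $y+h$: expanding the action in $h$ and integrating by parts with the Euler--Lagrange equation $\tfrac{d}{d\tau}L_v=L_x$ along $\xi$, the interior terms cancel and one is left with $A_{s,t}(x,y+h)\leqslant A_{s,t}(x,y)+\langle L_v(t,\xi(t),\dot\xi(t)),h\rangle+O(|h|^2)$, so $L_v(t,\xi(t),\dot\xi(t))\in D^+_yA_{s,t}(x,y)$; since $A_{s,t}(x,\cdot)$ is differentiable at $y$, this superdifferential is the singleton $\{D_yA_{s,t}(x,y)\}$, which gives \eqref{eq:diff_A_t_y}. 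Perturbing the left endpoint instead yields \eqref{eq:diff_A_t_x}, the minus sign coming from the boundary term $[\,L_v\,\delta\xi\,]^t_s$ evaluated at $\tau=s$; alternatively this is the time reversal of the previous computation.

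The step I expect to be the genuine obstacle is not the $C^{1,1}$ sandwich or the transversality identity --- both routine once their inputs are available --- but the uniformity that makes everything fit on the cone $S_\lambda(x,T_\lambda')$: one must know that the minimizer is unique there and that its velocity stays bounded by $\kappa_T(\lambda)$ with a constant independent of $x$ and $s$, which is exactly Proposition~\ref{Main_bound_Lem}. This is the point where hypothesis (L3) is indispensable: without it the time dependence of $L$ obstructs the a priori estimates of \cite{Cannarsa-Cheng3}, whereas the bound $|L_t|\leqslant c(1+L)$ lets one run a Gronwall estimate of the shape $\tfrac{d}{dt}\big(\int L\big)\leqslant c\big(1+\int L\big)$ to control the action along an arbitrary minimizer, and the superlinearity in (L2) then turns this into the required velocity bound. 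Once these bounds are secured, the remainder follows the time-independent argument of \cite{Cannarsa-Cheng3} essentially verbatim.
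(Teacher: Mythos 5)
Your proposal matches the intended argument: the paper omits the proof, describing it as parallel to the time-independent case of \cite{Cannarsa-Cheng3}, and that argument is precisely your sandwich of Propositions \ref{semiconcave_A_t} and \ref{convexity_A_t} to obtain $C^{1,1}_{\mathrm{loc}}$, together with the classical variation--integration-by-parts computation for \eqref{eq:diff_A_t_y}--\eqref{eq:diff_A_t_x}, with Proposition \ref{Main_bound_Lem} (hence condition (L3)) supplying the uniform bounds. One detail to tighten: your time reversal $\check L(\tau,z,w)=L(-\tau,z,-w)$ converts the varying final time $t$ into a varying \emph{initial} time for $\check L$, so Propositions \ref{semiconcave_A_t} and \ref{convexity_A_t} do not apply verbatim to $(t,y)\mapsto A_{s,t}(y,x)$; you need their analogues in which the initial time and initial point vary, which are proved by the same comparison-curve estimates.
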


\end{document}